\documentclass[12pt,leqno]{article}
\usepackage{graphicx} % Required for inserting images
\usepackage{amsfonts}
\pagestyle{plain}
\usepackage{graphicx, amsmath, amsthm, amsfonts, amssymb, color, ulem}
\usepackage{cite}
\usepackage{mathrsfs, float}
\setlength{\topmargin}{0cm} \setlength{\oddsidemargin}{0cm}
\setlength{\evensidemargin}{0cm} \setlength{\textwidth}{16.5truecm}
\setlength{\textheight}{22truecm}
\newtheorem{thm}{Theorem}[section]
\newtheorem{ass}[thm]{Assumption}

\newtheorem{lem}[thm]{Lemma}

\newtheorem{exa}[thm]{Example}
\newtheorem{rem}[thm]{Remark}
\theoremstyle{definition}

\definecolor{wco}{rgb}{0.5,0.2,0.3}
\renewcommand{\bar}{\overline}

\numberwithin{equation}{section}

\title{{\bf Convergence rate of nonlinear delayed neutral McKean-Vlasov SDEs driven by fractional Brownian motions\thanks{Supported by Natural Science Foundation of China(12061034, 12271524), Natural Science Foundation of Jiangxi(20224ACB201002), Natural Science Foundation of Hunan(2022JJ30674), China Postdoctoral Science Foundation(2022M711424), and Special Funding Project for Graduate Innovation of Jiangxi(YC2023-B182). }}
}

\author
{ {\bf Shengrong Wang}$^{\tt a, b}$, {\bf Jie Xie}$^{\tt a}$, \
{\bf Li Tan}$^{\tt a, c}$ \thanks{Contact E-mail address: tltanli@126.com}
\\[0.5ex]
$^{\tt a}$School of Statistics and Data Science, Jiangxi University of \\ Finance and Economics, Nanchang, Jiangxi, 330013, China \\
$^{\tt b}$College of Modern Economics and Management of Jiangxi University\\ of Finance and Economics, Jiujiang, Jiangxi, 332020, China\\
$^{\tt c}$Key Laboratory of Data Science in Finance and Economics, Jiangxi \\ University of Finance and Economics, Nanchang, Jiangxi, 330013, China}

\begin{document}

\maketitle
\begin{abstract}
 In this paper, our main aim is to investigate the strong convergence for a neutral McKean-Vlasov stochastic differential equation with super-linear delay driven by fractional Brownian motion with Hurst exponent $H\in(1/2, 1)$. After giving uniqueness and existence for the exact solution, we analyze the properties including boundedness of moment and propagation of chaos. Besides, we give the Euler-Maruyama (EM) scheme and show that the numerical solution converges strongly to the exact solution. Furthermore, a corresponding numerical example is given to illustrate the theory. \\
 {\it Keywords }: Neutral McKean-Vlasov SDEs; super-linear delay; fractional Brownian motion; strong convergence rate; EM scheme
\end{abstract}

\section{Introduction}
The McKean-Vlasov stochastic differential equations (SDEs) have received a lot of attention since they were proposed by McKean \cite{MC66}. The concept of chaos propagation is defined in terms of relative entropy. Bossy and Talay \cite{BT97,BT96} chose to approximate the McKean-Vlasov
limit by replicating the behavior with a system of $n$ weakly interacting particles, and studied the numerical approximation of the solutions to the McKean-Vlasov SDEs. Lacker \cite{L18} showed how to considerably strengthen the usual mode of convergence of
an $n$-particle system to its McKean-Vlasov limit, known as propagation of chaos. Hammersley et al. \cite{HSS21} investigated the McKean-Vlasov SDEs with common noise, and developed an appropriate definition of
weak solutions for such equations. The existence and uniqueness of the exact solutions as well as the propagation of chaos of McKean-Vlasov SDEs, have been intensively studied, for example in \cite{DST19,W18} and references therein.

Some studies have considered the situation where the coefficients of McKean-Vlasov SDEs exhibit super-linear growth. Dos Reis et al.\cite{DES22} presented two fully probabilistic Euler schemes for the simulation of McKean-Vlasov SDEs: one explicit and one implicit. These schemes are designed for McKean-Vlasov SDEs under the one-sided Lipschitz drift assumption and the polynomial growth in diffusion. Reisinger and Stockinger \cite{RS22} introduced adaptive EM schemes for McKean-Vlasov SDEs with one-sided Lipschitz condition and Polynomial growth condition for the drift, and proved strong convergence rate of $1/2$. Gao et al. \cite{gghy24} studied the numerical scheme of highly nonlinear neutral multiple-delay stochastic McKean-Vlasov equations, and established the tamed EM
scheme for the corresponding particle system, and obtained the convergence rate in $\mathcal{L}^p$ sense. Cui et al. \cite{clly22} studied numerical methods for approximating solutions to a sort of McKean-Vlasov neutral SDEs, where the drift coefficient exhibits super-linear growth, and constructed the strong convergence rate of the tamed EM scheme.

Generally, let $B^H=\{B_{t}^H, t\in [0,T]\}$ be a fractional Brownian motion with Hurst exponent $H\in (0,1)$ defined on the probability space $(\Omega,\mathcal{F},\mathbb{P})$. Then, $B^H$ is a centered Gaussian process
with the covariance function
\begin{equation*}
 R_{H}(t,s)=\mathbb{E}(B_{t}^{H}B_{s}^{H})=\frac{1}{2}(t^{2H}+s^{2H}-|t-s|^{2H}),\ \ \forall t,s\in[0,T].
\end{equation*}

The fractional Brownian motion$\{B_{t}^H\}_{t\geq 0}$ corresponds to a standard Brownian motion when $H=1/2$. This process was
introduced by Kolmogorov and studied by Mandelbrot and Van Ness \cite{BEV68}. The self-similarity and long memory properties make fractional Brownian motion a suitable input noise for various models. It is said that $B^H$ is neither a Markov process nor a semimartingale unless $H=1/2$. However, it has been shown that many basic tools in ordinary stochastic calculus have their counterparts for fractional Brownian motion\cite{Y19}. He et al. \cite{HGZG23} proposed a truncated EM scheme for SDEs driven by fractional Brownian motion with super-linear drift coefficient and obtained the convergence rate of the numerical method. For more details about SDEs driven by fractional Brownian motion, we can refer to \cite{HP09,LHH23,WDX22,ZY21} and the related references therein.

Recently, fractional Brownian motion has been applied to financial time series, hydrology, and telecommunications. To develop these applications, numerous equations have been established regarding fractional Brownian motion, including McKean-Vlasov SDEs. However, compared to SDEs driven by standard Brownian motion, the numerical issues of SDEs driven by fractional Brownian motion have not been well studied. Among them, the research on McKean-Vlasov SDEs is even scarcer. To name a few, Bauer and Meyer-Brandis\cite{BM19} studied McKean-Vlasov equations on infinite-dimensional Hilbert spaces with irregular drift and additive fractional noise. Shen et al. \cite{SXW22} established the existence and uniqueness theorem for solutions of the McKean-Vlasov SDEs driven by fractional Brownian motion and standard Brownian motion. Fan et al. \cite{FHS22} studied a McKean-Vlasov SDE driven by fractional Brownian motion as follows:
\begin{equation}\label{Hedemoxing}
{\rm d}X_{t}=b(t, X_{t},\mathcal{L}(X_{t})){\rm d}t+\sigma(t, \mathcal{L}(X_{t})){\rm d}B_{t}^H,
\end{equation}
where the coefficients $b:[0,T]\times\mathbb{R}^d\times\mathcal{P}_\theta(\mathbb{R}^d)\to \mathbb{R}^d$, $\sigma:[0,T]\times\mathcal{P}_{\theta}(\mathbb{R}^d)\to \mathbb{R}^d\otimes\mathbb{R}^d$, here, $ \mathcal{P}_{\theta}(\mathbb{R}^d)$ is the set of probability measures on $\mathbb{R}^d$ with finite $\theta$-th moment. They gave the well-posedness of \eqref{Hedemoxing} and established the Bismut formula by Malliavin calculus. He et al.\cite{HGZ24} studied the EM scheme for \eqref{Hedemoxing} where the coefficients are $\mathbb{W}_{\theta}$-Lipschitz.

Since stochastic differential delay equations have been widely discussed in many researches to describe the dependence of past behavior, for example \cite{r8,r11}. Neutral McKean-Vlasov SDDEs have also been discussed, such as \cite{clly22}, \cite{gghy24}. However, there are almost no studies on neutral McKean-Vlasov SDEs with delay driven by fractional Brownian motion, and even fewer that address the case where the coefficients exhibit nonlinear growth as well. Motivated by the above discussion, we consider neutral McKean-Vlasov SDEs with super-linear delay driven by fractional Brownian motion, as an extension of \eqref{Hedemoxing}
\begin{equation*}
 {\rm d}\big(X_{t}-D(X_{t-\tau
 })\big)=b(X_{t},X_{t-\tau},\mathcal{L}(X_{t})){\rm d}t+\sigma(\mathcal{L}(X_{t})){\rm d}B_{t}^H,
\end{equation*}
where $D(\cdot)$ is a neutral term and $\tau>0$ is the delay.

The structure of the rest of the paper is as follows. In Section 2, the mathematical preliminaries on the neutral McKean-Vlasov SDEs with super-linear delays driven by fractional Brownian motion are presented. In Section 3, existence and uniqueness of the solution is given. In Section 4, the propagation of chaos for this model is also analyzed. In Section 5, the EM scheme is established and the strong convergence rate is given. Finally, a numerical example is presented in Section 6.

\section{Preliminaries}
Throughout the paper we work on a filtered probability space $\{\Omega,\mathcal{F},\{\mathcal{F}\}_{t\geq 0},\mathbb{P}\}$ satisfying the usual
conditions. The canonical process $\{B^H_{t}\}_{t\in[0,T]}$ is a $d$-dimensional fractional Brownian motion defined on the probability space with Hurst exponent $H\in(0,1)$. Let $(\mathbb{R}^d,\langle\cdot,\cdot\rangle,|\cdot|)$ be the $d$-dimensional Euclidean space with inner product $\langle\cdot,\cdot\rangle$ and Euclidean norm $|\cdot|$. For a matrix, we denote by $\|\cdot\|$ the Euclidean norm. In addition, we use $\mathcal{P}(\mathbb{R}^d)$ to denote the family of all probability measures on $(\mathbb{R}^d,\mathcal{B}(\mathbb{R}^d))$, where $\mathcal{B}(\mathbb{R}^d)$ denotes the Borel $\sigma$-field over $\mathbb{R}^d$. For any $\tau>0$, let $\mathcal{C}([-\tau,0];\mathbb{R})$ be the family of all continuous functions $\varphi$ from $[-\tau,0]$ to $\mathbb{R}^d$ with $\|\varphi\|:=\sup\limits_{-\tau\le\rho\le0}|\varphi(\rho)|$. Define the subset of probability measures with finite $\theta$-th moment by
\begin{equation*}
 \mathcal{P}_{\theta}(\mathbb{R}^d)=\bigg\{\mu\in\mathcal{P}(\mathbb{R}^d):\int_{\mathbb{R}^d}|x|^{\theta}\mu({\rm d}x)<\infty\bigg\},
\end{equation*}
and for $\mu\in\mathcal{P}_{\theta}(\mathbb{R}^d)$, define
\begin{equation*}
\mathcal{W}_{\theta}(\mu)=\left(\int_{\mathbb{R}^d}|x|^{\theta}\mu({\rm d}x)\right)^{1/\theta}.
\end{equation*}
As a metric on the space, we use the Wasserstein distance. For $\mu,\nu \in\mathcal{P}_{\theta}(\mathbb{R}^d)$ with $\theta\ge1$, the Wasserstein
distance between $\mu$ and $\nu$ is defined as
\begin{equation*}\label{wassersteindistance}
 \mathbb{W}_{\theta}(\mu,\nu)=\bigg(\inf_{\pi\in\Pi(\mu,\nu)} \int_{\mathbb{R}^d\times\mathbb{R}^d}|x-y|^{\theta}\pi({\rm d}x,{\rm d}y)\bigg)^{1/\theta},
\end{equation*}
where $\Pi(\mu,\nu)$ stands for all the couplings of $\mu$ and $\nu$, i.e., $\pi\in \Pi(\mu, \nu)$  is a probability measure on $\mathbb{R}^d\times\mathbb{R}^d$ such that $\pi(\cdot,\mathbb{R}^d)=\mu(\cdot)$ and $\pi(\mathbb{R}^d,\cdot)=\nu(\cdot)$. For $p\ge 1$, let $L^p(\mathbb{R}^d)$ be the space of $\mathbb{R}^d$-valued random variables $V$ that satisfies $\parallel V(x)\parallel_{L^p}:=(\mathbb{E}|V(x)|^p)^{1/p}<\infty.$

It is worth noting that there are two major obstacles depending on the properties of $\{B^H_{t}\}_{t\in[0,T]}$. Firstly, the fractional Brownian motion
is not a semimartingale except in the case of standard Brownian motion ($H=1/2$), hence the classical It\^{o} calculus based on semimartingales cannot be directly applied to the fractional case. Secondly, there is no martingale representation theorem with respect to the fractional Brownian motion.

In this paper, we consider the following $d$-dimensional neutral McKean-Vlasov SDDE driven by fractional Brownian motion as follows:
\begin{equation}\label{pp}
\left\{
\begin{array}{ll}
     {\rm d}\big(X_{t}-D(X_{t-\tau})\big)=b(X_{t},X_{t-\tau},\mathcal{L}_{X_{t}}){\rm d}t+\sigma(\mathcal{L}_{X_{t}}){\rm d}B^{H}_{t},\ \ t>0,   \\
     X_{t}=\xi, \ \ t\in[-\tau,0] ,
\end{array}
\right.
\end{equation}
where $D:\mathbb{R}^d\to\mathbb{R}^d$, $b:\mathbb{R}^d\times\mathbb{R}^d\times\mathcal{P}_\theta(\mathbb{R}^d)\to \mathbb{R}^d$, $\sigma:\mathcal{P}_{\theta}(\mathbb{R}^d)\to \mathbb{R}^d\otimes\mathbb{R}^d$, $\mathcal{L}_{X_{t}}$ denotes the law of $X_{t}$ and $H\in(1/2,1)$, the initial data $\xi\in L^{\bar{p}}(\Omega\to \mathbb{R}^d,\mathcal{F}_{0},\mathbb{P})$ with $\bar{p}\geq \theta\geq 1$. We impose the following assumptions.

\begin{ass}\label{duoxiangshizengz}
{\rm $D(0)=0$. And there exist positive constants $L$ and $l\geq 1$, $0< \lambda< 1$, such that
\begin{align*}
  |D(x)-D(y)|\leq \lambda|x-y|,
\end{align*}
\begin{equation*}
\begin{array}{ll}
     |b(x,y,\mu)-b(\bar{x},\bar{y},\nu)|\leq L\left[|x-\bar{x}|+(1+|y|^l+|\bar{y}|^l)|y-\bar{y}|+\mathbb{W}_{\theta}(\mu,\nu)\right],
\end{array}
\end{equation*}
and
\begin{equation*}
\begin{array}{ll}
  \parallel\sigma(\mu)-\sigma(\nu)\parallel \leq L\mathbb{W}_{\theta}(\mu,\nu),
\end{array}
\end{equation*}
for all $x,y,\bar{x},\bar{y}\in\mathbb{R}^d$, $\mu,\nu\in\mathcal{P}_{q}(\mathbb{R}^d)$. And for initial experience distribution $\delta_{0}$,
\begin{equation*}
|b(0,0,\delta_0)|\vee\parallel\sigma(\delta_0)\parallel\leq L.
\end{equation*}
}
\end{ass}

\begin{lem}\label{xishuzengzhangtiaojian}
{\rm Let Assumption \ref{duoxiangshizengz} hold. There exists a positive constant $L$ such that
\begin{equation*}
|b(x,y,\mu)|\leq L\left(1+|x|+|y|^{l+1}+\mathbb{W}_{\theta}(\mu,\delta_0)\right),
\end{equation*}
and
\begin{equation*}
\parallel\sigma(\mu)\parallel\leq L\left(1+\mathbb{W}_{\theta}(\mu,\delta_0)\right).
\end{equation*}
}
\end{lem}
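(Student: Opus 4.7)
The plan is to derive both growth bounds directly from the corresponding Lipschitz-type estimates in Assumption \ref{duoxiangshizengz} by evaluating the differences at the reference point $(0,0,\delta_0)$ and then using the triangle inequality together with the explicit bound $|b(0,0,\delta_0)|\vee\|\sigma(\delta_0)\|\le L$.

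First I would handle the estimate on $\sigma$, since it is the simpler case. Writing
\begin{equation*}
\|\sigma(\mu)\|\le \|\sigma(\mu)-\sigma(\delta_0)\|+\|\sigma(\delta_0)\|,
\end{equation*}
the Lipschitz assumption on $\sigma$ bounds the first term by $L\,\mathbb{W}_\theta(\mu,\delta_0)$, while the second term is at most $L$ by the hypothesis on the reference point. Relabeling the constant, this gives the stated inequality. For the drift I would apply the analogous splitting
\begin{equation*}
|b(x,y,\mu)|\le |b(x,y,\mu)-b(0,0,\delta_0)|+|b(0,0,\delta_0)|,
\end{equation*}
and then invoke Assumption \ref{duoxiangshizengz} with $(\bar x,\bar y,\nu)=(0,0,\delta_0)$ to obtain
\begin{equation*}
|b(x,y,\mu)|\le L\bigl[|x|+(1+|y|^{l})|y|+\mathbb{W}_\theta(\mu,\delta_0)\bigr]+L.
\end{equation*}

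The only nontrivial point is absorbing the term $(1+|y|^l)|y|=|y|+|y|^{l+1}$ into a clean expression of the form $C(1+|y|^{l+1})$. Since $l\ge 1$, the elementary inequality $|y|\le 1+|y|^{l+1}$ (considering the cases $|y|\le 1$ and $|y|>1$ separately) lets us replace $|y|+|y|^{l+1}$ by $1+2|y|^{l+1}$, and after collecting constants and relabeling $L$ to a possibly larger constant (still denoted $L$, as in the statement) one recovers
\begin{equation*}
|b(x,y,\mu)|\le L\bigl(1+|x|+|y|^{l+1}+\mathbb{W}_\theta(\mu,\delta_0)\bigr).
\end{equation*}

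I do not anticipate any real obstacle here; the lemma is essentially a bookkeeping consequence of the Lipschitz conditions and the bound at the base point, and the only care needed is the handling of the nonlinear delay term through the elementary inequality on $|y|$ to obtain a growth bound of pure polynomial type $|y|^{l+1}$.
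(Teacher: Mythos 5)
Your argument is correct and is exactly the routine verification the paper omits (its proof reads ``We omit it since it is obvious''): split off the base point $(0,0,\delta_0)$, apply the Lipschitz-type bounds of Assumption \ref{duoxiangshizengz}, and absorb $|y|\le 1+|y|^{l+1}$ into the polynomial term. Nothing further is needed.
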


\begin{proof}
 We omit it since it is obvious.
\end{proof}

\begin{lem}\cite[Lemma 4.1]{M07}\label{Maodebudengshi}
 {\rm Let $p>1$, $\varepsilon>0$ and $x,y\in\mathbb{R}$. Then}
 \begin{align*}
   |x+y|^p\leq \bigg(1+\varepsilon^{\frac{1}{p-1}}\bigg)^{p-1}\bigg(|x|^p+\frac{|y|^p}{\varepsilon}\bigg).
 \end{align*}
 {\rm Especially for $\varepsilon=((1-\lambda)/\lambda)^{p-1}$, where $p\geq 2$ and $0<\lambda<1$, we get}
 \begin{align*}
  |x+y|^p\leq \frac{1}{\lambda^{p-1}}|x|^p+\frac{1}{(1-\lambda)^{p-1}}|y|^p.
 \end{align*}
\end{lem}

\section{Existence and Uniqueness}
In stead of the usual Picard iteration, we use the Carath\'{e}odory approximation to show the existence and uniqueness for \eqref{pp}. Since for the Picard iteration, one can not be able to establish a Cauchy sequence in case of equations with super-linearly growing coefficients. We first give the definition of Carath\'{e}odory approximate as follows. For every integer $n\geq 2/\tau$, define $X_{t}^n$ on $[-\tau,T]$ by
\begin{align*}
 X_{t}^n=\xi(t),\ \ t\in[-\tau,0],
\end{align*}
and for $t\in[0,T]$,
\begin{equation}\label{Caratheodory  s approximat system}
\begin{aligned}
X_{t}^n-D(X_{t-\tau}^n)=\xi(0)+D(\xi(-\tau))+\int_{0}^tb\left(X^n_{s-1/n},X^n_{s-\tau},\mathcal{L}_{X_{s-1/n}^{n}}\right){\rm d}s
+\int_{0}^t\sigma\left(\mathcal{L}_{X_{s-1/n}^{n}}\right){\rm d}B_{s}^H.
\end{aligned}
\end{equation}
It is important to note that each $X_{t}^n$ can be determined explicitly by the stepwise iterated It\^{o} integrals over the intervals $[0, 1/n]$, $(1/n, 2/n]$ etc. To obtain the desired results, it is also necessary to establish the following lemmas.

\begin{lem}\label{boundedness of pp}
{\rm Let Assumption \ref{duoxiangshizengz} hold. Then for $\bar{p}>1/H$ and $n\geq2/\tau$, we have
\begin{equation*}
 \mathbb{E}\bigg(\sup_{-\tau\leq t\leq T}|X_{t}^n|^{\bar{p}} \bigg)\leq C,
\end{equation*}
where $C$ is a positive constant depending on $\bar{p}, H, \xi, T, L$.
}
\end{lem}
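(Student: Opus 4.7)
The plan is to induct over the intervals $I_k := [(k-1)\tau, k\tau] \cap [0, T]$ for $k = 0, 1, 2, \ldots$. On $I_0 := [-\tau, 0]$ one has $X_t^n \equiv \xi$, so any finite moment of $\sup_{I_0}|X_t^n|$ equals the corresponding moment of $|\xi|$. On $I_k$ with $k \ge 1$ I would raise \eqref{Caratheodory  s approximat system} to the $\bar p$-th power, split the right-hand side into initial, drift, and diffusion contributions via an elementary inequality $|a+b+c|^{\bar p}\le 3^{\bar p-1}(|a|^{\bar p}+|b|^{\bar p}+|c|^{\bar p})$, and estimate each piece before closing the argument with Gr\"onwall's inequality.

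For the drift, H\"older's inequality in time together with Lemma~\ref{xishuzengzhangtiaojian} yields
\[
\mathbb{E}\sup_{0\le u\le t}\left|\int_0^u b(X_{s-1/n}^n, X_{s-\tau}^n, \mathcal{L}_{X_{s-1/n}^n})\,{\rm d}s\right|^{\bar p} \le C t^{\bar p-1}\!\int_0^t \mathbb{E}\bigl(1+|X_{s-1/n}^n|^{\bar p}+|X_{s-\tau}^n|^{(l+1)\bar p}+\mathbb{W}_{\theta}(\mathcal{L}_{X^n_{s-1/n}},\delta_0)^{\bar p}\bigr){\rm d}s,
\]
and Jensen's inequality together with $\theta\le\bar p$ bounds $\mathbb{W}_{\theta}(\mathcal{L}_{X^n_{s-1/n}},\delta_0)^{\bar p}$ by $\mathbb{E}|X_{s-1/n}^n|^{\bar p}$. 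The delayed term $|X_{s-\tau}^n|^{(l+1)\bar p}$ is the crucial contribution: because $s\in[0,k\tau]$ implies $s-\tau\in[-\tau,(k-1)\tau]$, it is controlled by the inductive hypothesis on $I_{k-1}$ applied at the elevated moment order $(l+1)\bar p$.

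For the diffusion, the key structural observation I would exploit is that $\sigma(\mathcal{L}_{X^n_{s-1/n}})$ is a \emph{deterministic} function of $s$, since the law is non-random. The stochastic integral therefore becomes a Wiener-type integral with deterministic kernel against fBm, and for $H>1/2$ with $\bar p>1/H$ the standard Gaussian / Garsia--Rodemich--Rumsey moment estimate gives
\[
\mathbb{E}\sup_{0\le u\le t}\left|\int_0^u \sigma(\mathcal{L}_{X^n_{s-1/n}})\,{\rm d}B_s^H\right|^{\bar p} \le C_{\bar p,H,T}\int_0^t \|\sigma(\mathcal{L}_{X^n_{s-1/n}})\|^{\bar p}{\rm d}s,
\]
which, via the growth of $\sigma$ in Lemma~\ref{xishuzengzhangtiaojian} and the same Jensen step, is controlled by $C(1+\int_0^t \mathbb{E}|X^n_{s-1/n}|^{\bar p}\,{\rm d}s)$. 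Collecting the drift and diffusion estimates produces an integral inequality for $\phi(t):=\mathbb{E}\sup_{-\tau\le u\le t}|X_u^n|^{\bar p}$ on $I_k$ to which Gr\"onwall's lemma applies, closing the induction step.

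The principal obstacle is the super-linear delay: bounding the $\bar p$-th moment on $I_k$ forces control of the $(l+1)\bar p$-th moment on $I_{k-1}$, and iterating backwards requires finiteness of $\mathbb{E}|\xi|^{\bar p(l+1)^{\lceil T/\tau\rceil}}$; consequently the dependence of $C$ on $\xi$ in the statement has to be understood as incorporating sufficiently many moments of the initial datum. The secondary technical ingredient is the fBm moment bound displayed above, whose applicability is exactly what dictates the hypothesis $\bar p>1/H$.
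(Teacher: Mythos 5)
Your proposal is correct and follows essentially the same route as the paper: the $3^{\bar p-1}$ splitting, H\"older plus the growth lemma for the drift, a Wiener-integral moment bound for the fBm term (the paper derives it via the stochastic-Fubini/fractional-kernel trick of \cite{FHS22}, which is equivalent to your deterministic-integrand Gaussian estimate and likewise forces $\bar p H>1$), the bound $\mathbb{W}_\theta(\mathcal{L}_{X},\delta_0)^{\bar p}\le\mathbb{E}|X|^{\bar p}$, Gr\"onwall, and then a backward induction over $\tau$-length intervals with the escalating moment orders $u_i=([T/\tau]+2-i)\bar p(l+1)^{[T/\tau]+1-i}$ to absorb the super-linear delay. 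Your explicit remark that this iteration requires finiteness of $\mathbb{E}|\xi|^{\bar p(l+1)^{\lceil T/\tau\rceil}}$ is in fact more candid than the paper, which asserts the base case merely from $\xi\in L^{\bar p}$.
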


\begin{proof}
For any $t\in[0,T]$, by Assumption \ref{duoxiangshizengz} and Lemma \ref{Maodebudengshi}, we immediately get
\begin{align*}
\lvert X_{t}^n\rvert^{\bar{p}}=&\lvert D(X_{t-\tau}^n)+X_{t}^n-D(X_{t-\tau}^n) \rvert^{\bar{p}}\\
 \leq &\frac{1}{\lambda^{\bar{p}-1}}|D(X_{t-\tau}^n)|^{\bar{p}}+\frac{1}{(1-\lambda)^{\bar{p}-1}}|X_{t}^n-D(X_{t-\tau}^n)|^{\bar{p}}\\
 \leq &\lambda|X_{t-\tau}|^{\bar{p}}+\frac{1}{(1-\lambda)^{\bar{p}-1}}|X_{t}^n-D(X_{t-\tau}^n)|^{\bar{p}},
\end{align*}
then,
\begin{align*}
  \sup_{0\leq s\leq t}|X_{s}^n|^{\bar{p}}\leq \lambda\parallel\xi\parallel^{\bar{p}}+\lambda \sup_{0\leq s\leq t}|X_{s}^n|^{\bar{p}}+\frac{1}{(1-\lambda)^{\bar{p}-1}}\sup_{0\leq s\leq t}|X_{s}^n-D(X_{s-\tau}^n)|^{\bar{p}}.
\end{align*}
Therefore, we get
\begin{align*}
  \sup_{0\leq s\leq t}|X_{s}^n|^{\bar{p}}\leq \frac{\lambda}{1-\lambda}\parallel\xi\parallel^{\bar{p}}+\frac{1}{(1-\lambda)^{\bar{p}}}\sup_{0\leq s\leq t}|X_{s}^n-D(X_{s-\tau}^n)|^{\bar{p}}.
\end{align*}
By the elementary inequality, the H\"{o}lder inequality and Lemma \ref{xishuzengzhangtiaojian}, we derive from \eqref{Caratheodory  s approximat system} that
\begin{align*}
&\mathbb{E}\bigg(\sup_{s\in[0,t]}|X_{s}^{n}-D(X_{s-\tau}^n)|^{\bar{p}}\bigg)\\
\leq& 3^{\bar{p}-1}\mathbb{E}|\xi(0)-D(\xi(-\tau))|^{\bar{p}}+3^{\bar{p}-1}\mathbb{E}\bigg(\sup_{s\in[0,t]}\bigg\lvert \int_{0}^sb\left(X_{r-1/n}^n,X_{r-\tau}^n,\mathcal{L}_{X_{r-1/n}^n}\right){\rm d}r\bigg\rvert^{\bar{p}}\bigg)\\
&+3^{\bar{p}-1}\mathbb{E}\bigg(\sup_{s\in[0,t]}\bigg\lvert \int_{0}^s\sigma\left(\mathcal{L}_{X_{r-1/n}^n}\right){\rm d}B_{r}^H\bigg\rvert^{\bar{p}}\bigg)\\
\leq &C+C\mathbb{E}\int_{0}^{t}\left[1+|X_{r-1/n}^n|^{\bar{p}}+|X_{r-\tau}^n|^{(l+1){\bar{p}}}+\left(\mathbb{W}_{\theta}(\mathcal{L}_{X_{r-1/n}^n},\delta_0)\right)^{\bar{p}}\right]{\rm d}r\\
&+C\mathbb{E}\bigg(\sup_{s\in[0,t]}\bigg\lvert \int_{0}^s\sigma(\mathcal{L}_{X_{r-1/n}^n}){\rm d}B_{r}^H\bigg\rvert^{\bar{p}}\bigg).
\end{align*}
For the last term, we use the techniques of Theorem 3.1 in \cite{FHS22}. Denote $\int_s^t(t-\kappa)^{-\kappa}(r-s)^{\kappa-1}{\rm d}r:=C(\kappa)$. Since for $\bar{p}H>1$, we can take some $\kappa$ such that $1-H<\kappa<1-1/{\bar{p}}$, then by the stochastic Fubini theorem, the H\"{o}lder inequality, and combining with Lemma \ref{xishuzengzhangtiaojian}, we get
\begin{equation}\label{2.14}
\begin{aligned}
&\mathbb{E}\bigg(\sup_{s\in[0,t]}\bigg\lvert \int_{0}^s\sigma(\mathcal{L}_{X_{r-1/n}^n}){\rm d}B_{r}^H\bigg\rvert^{\bar{p}}\bigg)\\
\leq& \frac{C(\kappa)^{-{\bar{p}}}}{({\bar{p}}-1-\kappa {\bar{p}})^{{\bar{p}}-1}}t^{{\bar{p}}-1-\kappa {\bar{p}}}\int_{0}^{t}\mathbb{E}\bigg|\int_{0}^s(s-r)^{\kappa-1}\sigma(\mathcal{L}_{X_{r-1/n}^n}){\rm d}B_{r}^H\bigg|^{{\bar{p}}}{\rm d}s\\
\leq& Ct^{{\bar{p}}-1-\kappa{\bar{p}}}\int_{0}^{t}\left(\int_0^s(s-r)^{\frac{\kappa-1}{H}}\parallel\sigma(\mathcal{L}_{X_{r-1/n}^n})\parallel^{\frac{1}{H}}{\rm d}r\right)^{\bar{p}H}{\rm d}s\\
\leq &Ct^{\bar{p}H-1}\int_{0}^{t}\parallel\sigma(\mathcal{L}_{X_{r-1/n}^n})\parallel^{\bar{p}}{\rm d}r\\
\leq& Ct^{\bar{p}H-1}L^{\bar{p}}\int_{0}^{t}\left[1+\mathbb{W}_\theta(\mathcal{L}_{X_{r-1/n}^n},\delta_0)\right]^{\bar{p}}{\rm d}r.
\end{aligned}
\end{equation}
Referring to the proof in \cite[Proposition 3.4]{DST19}, for the $\theta$-Wasserstein metric, we get
\begin{align*}
\left(\mathbb{W}_{\theta} (\mathcal{L}_{X_{r-1/n}^n},\delta_{0})\right)^{\bar{p}}=\left(\mathcal{W}_{\theta}(\mathcal{L}_{X_{r-1/n}^n})\right)^{\bar{p}}\leq \mathbb{E}|X_{r-1/n}^n|^{\bar{p}}.
\end{align*}
To sum up, by the Gronwall inequality, we get
\begin{equation}\label{20240510}
\mathbb{E}\bigg(\sup_{s\in[0,t]}|X_{s}^n|^{\bar{p}}\bigg)\leq C+C\int_{0}^t\mathbb{E}|X_{r-\tau }^n|^{(l+1)\bar{p}}{\rm d}r.
\end{equation}
Taking the skills of \cite[Lemma 2.1]{by13} as a reference, to get the assertion, we first set up a finite sequence $\{u_{1},u_{2},\cdots,u_{[T/\tau]+1}\}$ where $u_i=([T/\tau]+2-i)\bar{p}(l+1)^{[T/\tau]+1-i}$, and $[T/\tau]$ denotes the integer part of $T/\tau$. It is easy to see that $u_{i}\ge \bar{p}$, $u_{[T/\tau]+1}=\bar{p}$ and $(l+1)u_{i+1}<u_{i}$ for $i=1,2,\cdots,[T/\tau]$.
For $s\in[0,\tau]$, we immediately get
\begin{align*}
\mathbb{E}\bigg(\sup_{s\in[0,\tau]}|X_{s}^n|^{u_1}\bigg)\leq C
\end{align*}
since $\xi\in L^{\bar{p}}(\Omega\to \mathbb{R}^d,\mathcal{F}_{0},\mathbb{P})$. For $s\in[0,2\tau]$, by \eqref{20240510} and the H\"{o}lder inequality, we get
\begin{align*}
&\mathbb{E}\bigg(\underset{s\in[0,2\tau]}{\sup}|X_{s}^{n}|^{u_{2}}\bigg)
\leq C+C \int_{0}^{2\tau}\left[\mathbb{E}\bigg(\sup_{s\in[0,r]}|X_{s-\tau}^n|^{u_{1}}\bigg)\right]^{\frac{(l+1)u_{2}}{u_{1}}}{\rm d}r\leq C.
\end{align*}
By induction, the desired result is obtained.
\end{proof}

\begin{lem}\label{jiedechazhijuyoujie}
 {\rm Let Assumption \ref{duoxiangshizengz} hold. Then for $0\leq s\le t\leq T $ with $t-s<1$, and $p>1/H$, $(l+1)p\le\bar{p}$, $n\geq 2/\tau$, we have
 \begin{align*}
\mathbb{E}|X_{t}^n-X_{s}^n|^p\leq C[(t-s)^{p}+(t-s)^{pH}],
 \end{align*}
where $C$ is a positive constant depending on $p, H, \xi, T, L$ but independent of $s$ and $t$.
}
\end{lem}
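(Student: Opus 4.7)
The plan is to start from the integral form \eqref{Caratheodory  s approximat system}, write
$$X_t^n-X_s^n=\int_s^t b\!\left(X_{r-1/n}^n,X_{r-\tau}^n,\mathcal{L}_{X_{r-1/n}^n}\right){\rm d}r+\int_s^t \sigma\!\left(\mathcal{L}_{X_{r-1/n}^n}\right){\rm d}B_r^H,$$
and split $\mathbb{E}|X_t^n-X_s^n|^p$ into a drift part $I_1$ and a fractional stochastic integral part $I_2$ via the elementary inequality $|a+b|^p\le 2^{p-1}(|a|^p+|b|^p)$. The goal is to show $I_1\lesssim (t-s)^p$ and $I_2\lesssim (t-s)^{pH}$.

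For $I_1$, I would apply H\"{o}lder's inequality to get $I_1\le 2^{p-1}(t-s)^{p-1}\int_s^t\mathbb{E}|b(X_{r-1/n}^n,X_{r-\tau}^n,\mathcal{L}_{X_{r-1/n}^n})|^p{\rm d}r$, then invoke the linear-in-$x$, polynomial-in-$y$ growth bound from Lemma \ref{xishuzengzhangtiaojian} to dominate the integrand by a constant times $1+|X_{r-1/n}^n|^p+|X_{r-\tau}^n|^{(l+1)p}+(\mathbb{W}_\theta(\mathcal{L}_{X_{r-1/n}^n},\delta_0))^p$. The Wasserstein term is controlled by $\mathbb{E}|X_{r-1/n}^n|^p$ as in the proof of Lemma \ref{boundedness of pp}. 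Because $(l+1)p\le \bar{p}$, Lemma \ref{boundedness of pp} makes every moment on the right-hand side uniformly bounded, giving $I_1\le C(t-s)^p$.

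For $I_2$, I would mimic the fractional Fubini / H\"{o}lder argument used to derive \eqref{2.14}, but localized to the interval $[s,t]$. Pick $\lambda$ with $1-H<\lambda<1-1/p$ (possible because $pH>1$); the same chain of estimates yields
$$I_2\le C(t-s)^{pH-1}\int_s^t\|\sigma(\mathcal{L}_{X_{r-1/n}^n})\|^p\,{\rm d}r.$$
By the growth bound on $\sigma$ in Lemma \ref{xishuzengzhangtiaojian} together with Lemma \ref{boundedness of pp}, $\|\sigma(\mathcal{L}_{X_{r-1/n}^n})\|^p\le C(1+\mathbb{E}|X_{r-1/n}^n|^p)\le C$, so the integral is $O(t-s)$, giving $I_2\le C(t-s)^{pH}$. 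Summing the two contributions delivers the claim, with the constant $C$ depending only on $p,H,\xi,T,L$ and not on $s,t$.

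I expect the main technical obstacle to be the diffusion estimate: one has to verify carefully that the stochastic-Fubini + H\"{o}lder scheme of \eqref{2.14} really does localize to $[s,t]$ and produces the prefactor $(t-s)^{pH-1}$ (this is where the freedom to choose $\lambda$ strictly between $1-H$ and $1-1/p$ is used, and where the restriction $p>1/H$ comes in). The drift estimate is routine once $(l+1)p\le\bar{p}$ is imposed, since that is exactly the condition that makes Lemma \ref{boundedness of pp} applicable to the $|y|^{l+1}$ term of the growth bound.
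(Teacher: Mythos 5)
Your proposal is correct and follows essentially the same route as the paper: the same decomposition over $[s,t]$, H\"{o}lder plus the growth bound of Lemma \ref{xishuzengzhangtiaojian} and the moment bound of Lemma \ref{boundedness of pp} for the drift, and the localized version of the fractional-integral estimate \eqref{2.14} (with $1-H<\lambda<1-1/p$) producing the $(t-s)^{pH}$ term for the diffusion. No substantive differences to report.
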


\begin{proof}
Similarly to Lemma \ref{boundedness of pp}, we get
\begin{align*}
  |X_{t}^n-X_{s}^n|^p\leq \frac{1}{(1-\lambda)^p}|X_{t}^n-D(X_{t-\tau}^n)-X_{s}^n+D(X_{s-\tau}^n)|^p.
\end{align*}
By \eqref{Caratheodory  s approximat system}, we may compute
\begin{align*}
X_{t}^n-D(X_{t-\tau}^n)-X_{s}^n+D(X_{s-\tau}^n)=&\int_{s}^tb\bigg(X^n_{r-1/n},X^n_{r-\tau},\mathcal{L}_{X_{r-1/n}^{n}}\bigg){\rm d}r+\int_{s}^t\sigma(\mathcal{L}_{X_{r-1/n}^{n}}){\rm d}B_{r}^H.
\end{align*}
By the elementary inequality, the H\"{o}lder inequality, Lemma \ref{xishuzengzhangtiaojian} and similar skills as \eqref{2.14}, we get
\begin{align*}
&\mathbb{E}|X_{t}^n-D(X_{t-\tau}^n)-X_{s}^n+D(X_{s-\tau}^n)|^p\\
\leq &2^{p-1}\mathbb{E}\bigg\lvert \int_{s}^tb\left(X_{r-1/n}^n,X_{r-\tau}^n,\mathcal{L}_{X_{r-1/n}^n}\right){\rm d}r\bigg\rvert^{p}+2^{p-1}\mathbb{E}\bigg\lvert \int_{s}^t\sigma(\mathcal{L}_{X_{r-1/n}^n}){\rm d}B_{r}^H\bigg\rvert^{p}\\
\leq & 2^{p-1}(t-s)^{p-1}\mathbb{E}\int_{s}^{t}L^{p}\left[1+|X_{r-1/n}^n|+|X_{r-\tau}^n|^{l+1}+\mathbb{W}_{\theta}(\mathcal{L}_{X_{r-1/n}^n},\delta_0)\right]^{p}{\rm d}r\\
&+2^{p-1}C(t-s)^{pH-1}\int_{s}^tL^p\left[1+\mathbb{W}_{\theta}(\mathcal{L}_{X_{r-1/n}^n},\delta_0)\right]^p{\rm d}r\\
\leq &C(t-s)^{p-1}\int_{s}^t\mathbb{E}\bigg(\sup_{u\in[-\tau,r]}|X_{u}^n|^p\bigg){\rm d}r+C(t-s)^{p-1}\int_{s}^t\mathbb{E}\bigg(\sup_{u\in[-\tau,r]}|X_{u}^n|^{(l+1)p}\bigg){\rm d}r\\
&+C(t-s)^{pH-1}\int_{s}^t\mathbb{E}\bigg(\sup_{u\in[-\tau,r]}|X_{u}^n|^p\bigg){\rm d}r.
\end{align*}
Since $(l+1)p\le\bar{p}$, we can applying Lemma \ref{boundedness of pp} to obtain the assertion.
\end{proof}

\begin{thm}\label{existenceanduniquenessofsolution}
{\rm  Let Assumption \ref{duoxiangshizengz} hold. Then for $p>1/H$ and $(l+1)p\le\bar{p}$, \eqref{pp} admits a global unique solution.}
\end{thm}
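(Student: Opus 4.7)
The plan is to construct a solution of \eqref{pp} as the $n\to\infty$ limit of the Carath\'{e}odory approximations $X^n$ from \eqref{Caratheodory  s approximat system}. The main inputs are Lemma \ref{boundedness of pp} (uniform $\bar p$-moment) and Lemma \ref{jiedechazhijuyoujie} (time regularity), together with Assumption \ref{duoxiangshizengz}. I would show that $\{X^n\}$ is Cauchy in $L^p$ uniformly on $[0,T]$, extract the limit $X$, verify that $X$ solves \eqref{pp}, and then prove uniqueness by the same argument applied to two candidate solutions.

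To compare $X^n$ and $X^m$ with $m>n\geq 2/\tau$, I would write out $X_t^n-X_t^m$ from \eqref{Caratheodory  s approximat system} and estimate the drift and diffusion differences via Assumption \ref{duoxiangshizengz}. The shift mismatch is dealt with by splitting $X^n_{s-1/n}-X^m_{s-1/m}=(X^n_{s-1/n}-X^m_{s-1/n})+(X^m_{s-1/n}-X^m_{s-1/m})$ and controlling the second summand by Lemma \ref{jiedechazhijuyoujie}, which yields $O((1/n-1/m)^{p}+(1/n-1/m)^{pH})$. The fractional stochastic integral is bounded exactly as in \eqref{2.14} via stochastic Fubini and H\"older's inequality with some $1-H<\lambda<1-1/p$. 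On the initial interval $[0,\tau]$ the delay values $X^n_{s-\tau}=X^m_{s-\tau}=\xi_{s-\tau}$ coincide, so the super-linear piece of the drift drops out and a standard Gronwall delivers $\sup_{r\in[0,\tau]}\mathbb E|X_r^n-X_r^m|^p\to 0$ as $n,m\to\infty$.

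The crux is then the induction over $[k\tau,(k+1)\tau]$ for $k\geq 1$. The super-linear contribution on this interval is dominated by $(t-s)^{p-1}\int\mathbb E[(1+|X^n_{r-\tau}|^l+|X^m_{r-\tau}|^l)^p|X^n_{r-\tau}-X^m_{r-\tau}|^p]\,{\rm d}r$, with $r-\tau$ in the previous interval. H\"older's inequality with conjugate exponents $(l+1)/l$ and $l+1$, together with the uniform bound in Lemma \ref{boundedness of pp}, gives $\mathbb E[(1+|X^n_{r-\tau}|^l+|X^m_{r-\tau}|^l)^p|X^n_{r-\tau}-X^m_{r-\tau}|^p]\leq C(\mathbb E|X^n_{r-\tau}-X^m_{r-\tau}|^{p(l+1)})^{1/(l+1)}$. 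Hence a Cauchy estimate in $L^{p(l+1)}$ on the previous interval promotes, after another Gronwall, to one in $L^p$ on the current interval. Cascading through a finite sequence of exponents $p(l+1)^{[T/\tau]},\ldots,p(l+1),p$ over the finitely many subintervals of $[0,T]$ — exactly in the spirit of the $\{u_i\}$-sequence used in the proof of Lemma \ref{boundedness of pp} — yields that $\{X^n\}$ is Cauchy in $L^p$ uniformly on $[0,T]$.

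Calling the limit $X$, Fatou's lemma and Lemma \ref{boundedness of pp} give $\mathbb E(\sup_{t\in[0,T]}|X_t|^{\bar p})<\infty$, and passing to the limit in \eqref{Caratheodory  s approximat system} using Assumption \ref{duoxiangshizengz} and the stochastic-Fubini estimate \eqref{2.14} shows that $X$ solves \eqref{pp}. Uniqueness is obtained by running the identical interval-by-interval Gronwall scheme on the difference of two solutions sharing the initial datum $\xi$. The main obstacle throughout is the super-linear dependence of $b$ on the delay: a direct $L^p$-Gronwall fails because H\"older-splitting off $(1+|y|^l)^p$ forces a $p(l+1)$-moment onto the target side. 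The strict positivity of $\tau$, which makes the number of intervals finite, is precisely what allows the cascading-moments trick to close the estimate.
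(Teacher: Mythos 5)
Your proposal follows essentially the same route as the paper's proof: Carath\'eodory approximations, a Cauchy estimate in which the $1/n$-shift mismatch is split and controlled by Lemma \ref{jiedechazhijuyoujie}, the super-linear delay term is handled by H\"older together with the uniform moment bound of Lemma \ref{boundedness of pp}, and the estimate is closed by Gronwall plus an interval-by-interval cascade of exponents over $[k\tau,(k+1)\tau]$, with uniqueness by the same scheme. The only cosmetic difference is your choice of conjugate exponents $\left(\frac{l+1}{l},\,l+1\right)$ where the paper uses Cauchy--Schwarz, so your exponent cascade is $p(l+1)^{k}$ rather than the paper's $p\,2^{k}$; both variants are equally valid.
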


\begin{proof}
%{\color{red}{\it Referring to the proof method of Theorem 3.6 in \cite{SXW22},}}
We split the proof into two steps.\\
{\bf Step 1. Existence.} For $m>n\geq 2/\tau$ and $t\in[0,T]$, by Lemma \ref{Maodebudengshi}, we get
\begin{align*}
|X_{s}^m-X_{s}^n|^p\leq \frac{1}{(1-\lambda)^p}|X_{s}^m-D(X_{s-\tau}^m)-X_{s}^n+D(X_{s-\tau}^n)|^p.
\end{align*}
By \eqref{Caratheodory  s approximat system}, we get
\begin{align*}
&\mathbb{E}\bigg(\sup_{s\in[0,t]}|X^m_{s}-D(X_{s-\tau}^m)-X_{s}^n+D(X_{s-\tau}^n)|^p\bigg)\\
\leq& 2^{p-1}\mathbb{E}\bigg(\sup_{s\in[0,t]}\bigg\lvert \int_{0}^s\bigg[b\left(X^m_{r-1/m},X^m_{r-\tau},\mathcal{L}_{X_{r-1/m}^{m}}\right)-b\left(X^n_{r-1/n},X^n_{r-\tau},\mathcal{L}_{X_{r-1/n}^{n}}\right)\bigg]{\rm d}r\bigg\lvert^p\bigg)\\
&+2^{p-1}\mathbb{E}\bigg[\sup_{s\in[0,t]}\bigg\lvert \int_{0}^s\bigg(\sigma(\mathcal{L}_{X_{r-1/m}^{m}})-\sigma(\mathcal{L}_{X_{r-1/n}^{n}})\bigg){\rm d}B_{r}^H\bigg\rvert^p\bigg]\\
=&:I_{1}+I_{2}.
\end{align*}
By the H\"{o}lder inequality and Assumption \ref{duoxiangshizengz}, we get
\begin{align*}
 I_{1}=&2^{p-1}\mathbb{E}\bigg(\sup_{s\in[0,t]}\bigg\lvert \int_{0}^s\bigg[b\left(X^m_{r-1/m},X^m_{r-\tau},\mathcal{L}_{X_{r-1/m}^{m}}\right)-b\left(X^n_{r-1/n},X^n_{r-\tau},\mathcal{L}_{X_{r-1/n}^{n}}\right)\bigg]{\rm d}r\bigg\lvert^p\bigg)\\
 \leq &(2t)^{p-1}\mathbb{E}\int_{0}^t\left|b\left(X^m_{r-1/m},X^m_{r-\tau},\mathcal{L}_{X_{r-1/m}^{m}}\right)-b\left(X^n_{r-1/n},X^n_{r-\tau},\mathcal{L}_{X_{r-1/n}^{n}}\right)\right|^p{\rm d}r\\
 \leq &C\mathbb{E}\int_{0}^t|X^m_{r-1/m}-X^n_{r-1/n}|^p{\rm d}r+C\mathbb{E}\int_{0}^t\left[(1+|X^m_{r-\tau}|^l+|X^n_{r-\tau}|^l)|X^m_{r-\tau}-X^n_{r-\tau}|\right]^p{\rm d}r\\
  \leq &C\mathbb{E}\int_{0}^t|X^m_{r-1/m}-X^m_{r-1/n}|^p{\rm d}r+C\mathbb{E}\int_{0}^t|X^m_{r-1/n}-X^n_{r-1/n}|^p{\rm d}r\\
  &+C\mathbb{E}\int_{0}^t(|1+|X^m_{r-\tau}|^l+|X^n_{r-\tau}|^l)^{p}|X^m_{r-\tau}-X^n_{r-\tau}|^{p}{\rm d}r.
\end{align*}
Since for $l\ge1$, we have $2pl\le (l+1)p\le\bar{p}$, then by the H\"{o}lder inequality, together with Lemma \ref{boundedness of pp} and Lemma \ref{jiedechazhijuyoujie}, we have
\begin{align*}
 I_{1} \leq &C\mathbb{E}\int_{0}^t|X^m_{r-1/m}-X^n_{r-1/n}|^p{\rm d}r+C\mathbb{E}\int_{0}^t|X^m_{r-1/n}-X^n_{r-1/n}|^p{\rm d}r\\
 &+C\int_{0}^t\left[\mathbb{E}(1+|X^m_{r-\tau}|^l+|X^n_{r-\tau}|^l)^{2p}\right]^{\frac{1}{2}}\left(\mathbb{E}|X^m_{r-\tau}-X^n_{r-\tau}|^{2p}\right)^{\frac{1}{2}}{\rm d}r\\
 \leq &C\left[\left(\frac{1}{n}-\frac{1}{m}\right)^{p}+\left(\frac{1}{n}-\frac{1}{m}\right)^{pH}\right]+C\mathbb{E}\int_{0}^t|X^m_{r-1/n}-X^n_{r-1/n}|^p{\rm d}r\\
 &+C\int_{0}^t\left(\mathbb{E}|X^m_{r-\tau}-X^n_{r-\tau}|^{2p}\right)^{\frac{1}{2}}{\rm d}r.
\end{align*}
By Assumption \ref{duoxiangshizengz}, Lemma \ref{boundedness of pp}, Lemma \ref{jiedechazhijuyoujie} and the same techniques as \eqref{2.14}, we get
\begin{align*}
I_{2}=&2^{p-1}\mathbb{E}\bigg[\sup_{s\in[0,t]}\bigg\lvert \int_{0}^s\bigg(\sigma(\mathcal{L}_{X_{r-1/m}^{m}})-\sigma(\mathcal{L}_{X_{r-1/n}^{n}})\bigg){\rm d}B_{r}^H\bigg\rvert^p\bigg]\\
\leq & C\int_{0}^t\parallel\sigma(\mathcal{L}_{X_{r-1/m}^{m}})-\sigma(\mathcal{L}_{X_{r-1/n}^{n}})\parallel^p{\rm d}r\\
\leq &C\int_{0}^tL^p(\mathbb{W}_{\theta}(\mathcal{L}_{X_{r-1/m}^{m}}-\mathcal{L}_{X_{r-1/n}^{n}}))^p{\rm d}r\\
\leq &C\int_{0}^t\mathbb{E}|X_{r-1/m}^{m}-X_{r-1/n}^{n}|^p{\rm d}r\\
\leq&C\int_{0}^t\mathbb{E}|X_{r-1/m}^{m}-X_{r-1/n}^{m}|^p{\rm d}r+C\int_{0}^t\mathbb{E}|X_{r-1/n}^{m}-X_{r-1/n}^{n}|^p{\rm d}r\\
\leq &C\left[\left(\frac{1}{n}-\frac{1}{m}\right)^{p}+\left(\frac{1}{n}-\frac{1}{m}\right)^{pH}\right]+C\int_{0}^t\mathbb{E}|X_{r-1/n}^{m}-X_{r-1/n}^{n}|^p{\rm d}r.
\end{align*}
Combining $I_{1},I_{2}$ and using the Gronwall inequality, we obtain
\begin{align*}
&\mathbb{E}\bigg(\sup_{s\in[0,t]}|X^m_{s}-X_{s}^n|^p\bigg)\\
\leq &C\left[\left(\frac{1}{n}-\frac{1}{m}\right)^{p}+\left(\frac{1}{n}-\frac{1}{m}\right)^{pH}\right]+C\int_{0}^t\left(\mathbb{E}|X^m_{r-\tau}-X^n_{r-\tau}|^{2p}\right)^{\frac{1}{2}}{\rm d}r.
\end{align*}
Define a sequence $\{u_{1},u_{2},\cdots,u_{[T/\tau]+1}\}$ where $u_i=([T/\tau]+2-i)p2^{[T/\tau]+1-i}$, and $[T/\tau]$ denotes the integer part of $T/\tau$. It is easy to see that $u_{i}\ge p$, $u_{[T/\tau]+1}=p$ and $2u_{i+1}<u_{i}$ for $i=1,2,\cdots,[T/\tau]$.
For $s\in[0,\tau]$, we see
\begin{equation}\label{hao}
\mathbb{E}\bigg(\sup_{s\in[0,\tau]}|X^m_{s}-X_{s}^n|^{u_1}\bigg)\to 0, \ {\rm for}\  m,n\to \infty.
\end{equation}
For $s\in[0,2\tau]$, by the H\"{o}lder inequality, we get
\begin{align*}
&\mathbb{E}\bigg(\underset{s\in[0,2\tau]}{\sup}|X^m_{s}-X_{s}^n|^{u_{2}}\bigg)\\
\leq& C\left[\left(\frac{1}{n}-\frac{1}{m}\right)^{p}+\left(\frac{1}{n}-\frac{1}{m}\right)^{pH}\right]+C \int_{0}^{2\tau}\left[\mathbb{E}\bigg(\sup_{s\in[0,r]}|X_{s-\tau}^m-X_{s-\tau}^n|^{u_{1}}\bigg)\right]^{\frac{u_{2}}{u_{1}}}{\rm }dr.
\end{align*}
We derive from \eqref{hao} that
\begin{equation*}
\mathbb{E}\bigg(\underset{s\in[0,2\tau]}{\sup}|X^m_{s}-X_{s}^n|^{u_{2}}\bigg)\to 0, \ {\rm for}\  m,n\to \infty.
\end{equation*}
By induction, we can see that for $t\in[0, T]$,
\begin{equation}\label{Cauchyxulie}
\mathbb{E}\bigg(\sup_{s\in[0,t]}|X^m_{s}-X_{s}^n|^p\bigg)\to 0, \ {\rm for}\  m,n\to \infty.
\end{equation}
Consequently, $(X^n)_{n\geq 2/\tau}$ is a Cauchy sequence for $p>1/H$. Denote the limit of $(X^n)$ by $X$. Therefore, letting $m\to \infty$ in \eqref{Cauchyxulie}, we conclude
\begin{equation}\label{Cauchyxulie111}
 \lim_{n\to \infty} \mathbb{E}\bigg(\sup_{s\in[0,t]}|X_{s}-X_{s}^n|^p\bigg)=0.
\end{equation}
Next, we need to show that the $X_{t}$ is a solution to \eqref{pp}. For $t\in[0,T]$, we get
\begin{align*}
 \mathbb{E}|X_{t}-X^n_{t-1/n}|^p=&\mathbb{E}|X_{t}-X^n_{t}+X^n_{t}-X^n_{t-1/n}|^p\\
 \leq & 2^{p-1}\mathbb{E}|X_{t}-X^n_{t}|^p+2^{p-1}\mathbb{E}|X^n_{t}-X^n_{t-1/n}|^p.
\end{align*}
By Lemma \ref{jiedechazhijuyoujie} and \eqref{Cauchyxulie111}, we have
\begin{align*}
\mathbb{E}|X_{t}-X^n_{t-1/n}|^p\to 0,\ \ {\rm as}\ \ n\to \infty.
\end{align*}
By taking $n\to \infty$ in \eqref{Caratheodory  s approximat system}, we obtain
\begin{align*}
X_{t}-D(X_{t-\tau})=\xi(0)+D(\xi(-\tau))+\int_{0}^tb(X_{s},X_{s-\tau},\mathcal{L}_{X_{s}}){\rm d}s+\int_{0}^t\sigma(\mathcal{L}_{X_{s}}){\rm d}B_{s}^H,\ \ t\in[0,T],
\end{align*}
this indicates $X_{t}$ is a solution to \eqref{pp}. \\
{\bf Step 2. Uniqueness.} Let $X_{t}$ and $Y_{t}$ be two solutions of \eqref{pp} with $X_{t}=Y_{t}=\xi(t),\ \ t\in[-\tau,0]$. For $t\in[0,T]$, by the elementary inequality, we get
\begin{align*}
 &\mathbb{E}\bigg(\sup_{s\in[0,t]}|X_{s}-D(X_{s-\tau})-Y_{s}+D(Y_{s-\tau})|^p\bigg)\\
 \leq &2^{p-1}\mathbb{E}\bigg(\sup_{s\in[0,t]}\bigg\lvert \int_{0}^s(b(X_{r},X_{r-\tau},\mathcal{L}_{X_{r}})-b(Y_{r},Y_{r-\tau},\mathcal{L}_{Y_{r}})){\rm d}r\bigg\rvert^p\bigg)\\
 &+2^{p-1}\mathbb{E}\bigg(\sup_{s\in[0,t]}\bigg\lvert\int_{0}^s\sigma(\mathcal{L}_{X_{r}})-\sigma(\mathcal{L}_{Y_{r}}){\rm d}B_{r}^H\bigg\rvert^p\bigg)\\
 =&W_{1}+W_{2}.
\end{align*}
By Assumption \ref{duoxiangshizengz}, the H\"{o}lder inequality and Lemma \ref{boundedness of pp}, we get
\begin{align*}
 W_{1}=&2^{p-1}\mathbb{E}\bigg(\sup_{s\in[0,t]}\bigg\lvert \int_{0}^s(b(X_{r},X_{r-\tau},\mathcal{L}_{X_{r}})-b(Y_{r},Y_{r-\tau},\mathcal{L}_{Y_{r}})){\rm d}r\bigg\rvert^p\bigg)\\
 \leq &(2t)^{p-1}\mathbb{E}\int_{0}^t|b(X_{r},X_{r-\tau},\mathcal{L}_{X_{r}})-b(Y_{r},Y_{r-\tau},\mathcal{L}_{Y_{r}})|^p{\rm d}r\\
% \leq &C\mathbb{E}\int_{0}^t|X_{r}-Y_{r}|^p{\rm d}r+C\mathbb{E}\int_{0}^t(1+|X_{r-\tau}|^{l}+|Y_{r-\tau}|^{l})^{p}|X_{r-\tau}-Y_{r-\tau}|^{p}{\rm d}r\\
 \leq &C\mathbb{E}\int_{0}^t|X_{r}-Y_{r}|^p{\rm d}r+C\mathbb{E}\int_{0}^t\left[\mathbb{E}(1+|X_{r-\tau}|^{l}+|Y_{r-\tau}|^{l})^{2p}\right]^{\frac{1}{2}}\left(\mathbb{E}|X^m_{r-\tau}-X^n_{r-\tau}|^{2p}\right)^{\frac{1}{2}}{\rm d}r\\
  \leq &C\mathbb{E}\int_{0}^t|X_{r}-Y_{r}|^p{\rm d}r+C\mathbb{E}\int_{0}^t\left(\mathbb{E}|X^m_{r-\tau}-X^n_{r-\tau}|^{2p}\right)^{\frac{1}{2}}{\rm d}r.
\end{align*}
By Assumption \ref{duoxiangshizengz} again, we get
\begin{align*}
W_{2}=2^{p-1}\mathbb{E}\bigg(\sup_{s\in[0,t]}\bigg\lvert\int_{0}^s\sigma(\mathcal{L}_{X_{r}})-\sigma(\mathcal{L}_{Y_{r}}){\rm d}B_{r}^H\bigg\rvert^p\bigg)\leq C\mathbb{E}\int_{0}^t|X_{r}-Y_{r}|^p{\rm d}r.
\end{align*}
Recalling that
\begin{align*}
|X_{s}-Y_{s}|^p\leq \frac{1}{(1-\lambda)^p}|X_{s}-D(X_{s-\tau})-Y_{s}+D(Y_{s-\tau})|^p.
\end{align*}
Combining $W_{1}$-$W_{2}$ and using the Grownall inequality, we obtain
\begin{align*}
&\mathbb{E}\bigg(\sup_{s\in[0,t]}|X_{s}-Y_{s}|^p\bigg)\le C\int_{0}^t\left(\mathbb{E}|X^m_{r-\tau}-X^n_{r-\tau}|^{2p}\right)^{\frac{1}{2}}{\rm d}r.
\end{align*}
In the same way as in Step 1, it is easy to see that $X(t)=Y(t), t\in[0,T], \mathbb{P}-a.s.$ We complete the proof of uniqueness.
\end{proof}

\section{Propagation of Chaos}
Since \eqref{pp} is distribution-dependent, we exploit stochastic interacting particle systems to approximate it. We use a system of $N$ interacting particles,
\begin{equation}\label{interactingparticlesystem}
{\rm d}\big(X^{i,N}_{t}-D(X_{t-\tau}^{i,N})\big)=b(X^{i,N}_{t},X^{i,N}_{t-\tau},\mu^{X,N}_{t}){\rm d}t+(\mu^{X,N}_{t}){\rm d}B^{H,i}_t,\ \ i=1,2,\cdots,N,
\end{equation}
with the initial $X_{0}^{i,N}=\xi^i\in L^{\bar{p}}(\Omega\to\mathbb{R}^d,\mathcal{F}_{0},\mathbb{P})$, where the empirical measures is defined by
\begin{equation*}
\mu^{X,N}_{t}(\cdot)=\frac
{1}{N}\sum_{j=1}^N\delta_{X_{t}^{j,N}}(\cdot) ,
\end{equation*}
here, $\delta_{x}$ denotes the Dirac measure at point $x$. We quote Lemma 3.2 in \cite{HGZ24} as the following Lemma.

\begin{lem}\label{wassersteindistancechazhi}
{\rm For two empirical measures $\mu_{t}^N=\frac{1}{N}\sum\limits_{j=1}\limits^{N}\delta_{Z^{1,j,N}_{t}}$ and $\nu_{t}^N=\frac{1}{N}\sum\limits_{j=1}\limits^{N}\delta_{Z_{t}^{2,j,N}}$ , then
\begin{align*}
\mathbb{E}\left(\mathbb{W}_{\theta}(\mu_{t}^N,\nu_{t}^N)\right)\leq \mathbb{E}\bigg(\frac{1}{N}\sum_{j=1}^N\bigg\lvert Z_{t}^{1,j,N}-Z_{t}^{2,j,N}\bigg\rvert^{\theta}\bigg)^{1/\theta}.
\end{align*}
}
\end{lem}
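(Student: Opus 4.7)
The plan is to produce an explicit coupling of the two empirical measures and then invoke the variational definition of the Wasserstein distance, followed by Jensen's inequality to push the expectation through the $1/\theta$ power. This is a standard empirical-measure estimate, so I do not anticipate a real obstacle, but the argument has three precise ingredients that must be assembled in the correct order.

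\textbf{Step 1: construct the synchronous (diagonal) coupling.} The natural coupling pairs the two particle systems index-by-index. Define
\begin{equation*}
\pi_{t}^{N}(\cdot,\cdot)=\frac{1}{N}\sum_{j=1}^{N}\delta_{(Z_{t}^{1,j,N},\,Z_{t}^{2,j,N})}(\cdot,\cdot),
\end{equation*}
which is a probability measure on $\mathbb{R}^{d}\times\mathbb{R}^{d}$. I would verify directly that its first and second marginals equal $\mu_{t}^{N}$ and $\nu_{t}^{N}$ respectively, so $\pi_{t}^{N}\in\Pi(\mu_{t}^{N},\nu_{t}^{N})$.

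\textbf{Step 2: bound the Wasserstein distance by this coupling.} Because $\mathbb{W}_{\theta}$ is an infimum over all couplings, plugging $\pi_{t}^{N}$ into the definition yields
\begin{equation*}
\bigl(\mathbb{W}_{\theta}(\mu_{t}^{N},\nu_{t}^{N})\bigr)^{\theta}\leq \int_{\mathbb{R}^{d}\times\mathbb{R}^{d}}|x-y|^{\theta}\,\pi_{t}^{N}({\rm d}x,{\rm d}y)=\frac{1}{N}\sum_{j=1}^{N}\bigl|Z_{t}^{1,j,N}-Z_{t}^{2,j,N}\bigr|^{\theta},
\end{equation*}
so taking $\theta$-th roots gives a pointwise ($\omega$-wise) inequality.

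\textbf{Step 3: expectation and Jensen.} Taking $\mathbb{E}$ on both sides and invoking Jensen's inequality for the concave map $x\mapsto x^{1/\theta}$ (which is valid since $\theta\ge 1$), I obtain
\begin{equation*}
\mathbb{E}\bigl(\mathbb{W}_{\theta}(\mu_{t}^{N},\nu_{t}^{N})\bigr)\leq \mathbb{E}\bigg(\frac{1}{N}\sum_{j=1}^{N}\bigl|Z_{t}^{1,j,N}-Z_{t}^{2,j,N}\bigr|^{\theta}\bigg)^{1/\theta},
\end{equation*}
which is exactly the claimed bound. The only point needing care is the concavity application: without $\theta\ge 1$, Jensen would go the wrong way, but this is guaranteed by the standing hypothesis on the Wasserstein exponent in Section 2. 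No deeper machinery is required.
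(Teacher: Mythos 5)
Your proof is correct and complete. Note, however, that the paper itself does not prove this lemma at all: it simply quotes it as Lemma 3.2 of the reference [HGZ24] (He, Gao, Zhan, Guo, \emph{Commun. Nonlinear Sci.} 2024). So your argument is not so much a different route as the only route on display --- a self-contained, elementary derivation of a result the paper imports as a black box. The three ingredients you assemble (the diagonal coupling $\pi_t^N=\frac{1}{N}\sum_j\delta_{(Z_t^{1,j,N},Z_t^{2,j,N})}$, the variational characterization of $\mathbb{W}_\theta$, and Jensen for the concave map $x\mapsto x^{1/\theta}$ with $\theta\ge1$) are exactly what the cited lemma rests on, and each step checks out: the marginals of $\pi_t^N$ are indeed $\mu_t^N$ and $\nu_t^N$, the infimum bound holds $\omega$-wise, and Jensen is applied in the correct direction. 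One small bonus of your write-up: the pointwise inequality you establish in Step 2, namely $\bigl(\mathbb{W}_\theta(\mu_t^N,\nu_t^N)\bigr)^\theta\le\frac{1}{N}\sum_j|Z_t^{1,j,N}-Z_t^{2,j,N}|^\theta$ almost surely, is actually the form the paper uses later (e.g.\ in the chaos-propagation estimate, where the distance is raised to the power $p$ before taking expectations), so recording it explicitly as an intermediate step is more useful than the expectation-level statement of the lemma itself.
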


In order to prove that the particle approximation is useful, we provide the path propagation of the chaos, and consider a system of non-interacting particles as follows:
\begin{equation}\label{noninteractingparticlesystem}
{\rm d}\big(X_{t}^i-D(X_{t-\tau}^i)\big)=b(X_{t}^i,X_{t-\tau}^i,\mathcal{L}_{X_{t}^i}){\rm d}t+\sigma(\mathcal{L}_{X_{t}^i}){\rm d}B^{H,i}_{t},\ \
 X_{0}^i=X_{0}^{i,N}=\xi^i,\ \ i=1,2,\cdots,N.
\end{equation}
We remark that particles $\{X_{t}^{i}, i=1,2,\cdots,N\}$ are mutually independent whereas particles $\{X_{t}^{i,N}, i=1,2,\cdots,N\}$ are not independent but identically distributed.

\begin{lem}\label{Xityoujie}
{\rm Let Assumption \ref{duoxiangshizengz} hold. For ${\bar{p}}>1/H$, we have
\begin{align*}
 \mathbb{E}\bigg(\sup_{t\in[-\tau,T]}|X_{t}^{i}|^{\bar{p}}\bigg)+\mathbb{E}\bigg(\sup_{t\in[-\tau,T]}|X_{t}^{i,N}|^{\bar{p}}\bigg)\leq C,\ \ i=1,2,\cdots,N,
\end{align*}
where $C$ is a positive constant depending on ${\bar{p}},T,L,H, \xi$. }
\end{lem}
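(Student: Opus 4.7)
The plan is to treat the two summands separately but by essentially the same mechanism, using the iterative interval-by-interval argument already developed in Lemma \ref{boundedness of pp}.

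\textbf{Bound for $X^{i}$.} Since \eqref{noninteractingparticlesystem} is just an independent copy of \eqref{pp} driven by $B^{H,i}$, one route is to invoke Theorem \ref{existenceanduniquenessofsolution} and Fatou's lemma applied to the Carath\'eodory sequence: the bound in Lemma \ref{boundedness of pp} is uniform in $n$, and \eqref{Cauchyxulie111} gives a.s.\ convergence along a subsequence, so passing to the limit yields $\mathbb{E}\sup_{t\in[-\tau,T]}|X_{t}|^{\bar p}\le C$. Alternatively, one argues directly on \eqref{noninteractingparticlesystem}: apply the elementary inequality, H\"older's inequality, the growth bounds of Lemma \ref{xishuzengzhangtiaojian}, and the fractional stochastic-integral estimate from \eqref{2.14} to obtain
\[
\mathbb{E}\sup_{s\in[0,t]}|X_{s}^{i}|^{\bar p}\le C+C\int_{0}^{t}\mathbb{E}|X_{r-\tau}^{i}|^{(l+1)\bar p}\,{\rm d}r,
\]
after using $\mathbb{W}_{\theta}(\mathcal{L}_{X_r^i},\delta_0)^{\bar p}\le \mathbb{E}|X_r^i|^{\bar p}$ and absorbing the non-delayed $|X_r^i|^{\bar p}$ term by Gronwall. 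Then the ladder $\{u_1,\dots,u_{[T/\tau]+1}\}$ from the proof of Lemma \ref{boundedness of pp} is applied inductively on intervals $[0,k\tau]$.

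\textbf{Bound for $X^{i,N}$.} The new ingredient is controlling the empirical measure $\mu_{s}^{X,N}$. The key observation is that
\[
\mathbb{W}_{\theta}(\mu_{s}^{X,N},\delta_{0})^{\bar p}\le \bigg(\frac{1}{N}\sum_{j=1}^{N}|X_{s}^{j,N}|^{\theta}\bigg)^{\bar p/\theta}\le \frac{1}{N}\sum_{j=1}^{N}|X_{s}^{j,N}|^{\bar p},
\]
where the last step uses $\bar p\ge\theta$ and Jensen's inequality. Since the particles in \eqref{interactingparticlesystem} are exchangeable, $\mathbb{E}|X_{s}^{j,N}|^{\bar p}$ is independent of $j$, so this expectation coincides with $\mathbb{E}|X_{s}^{1,N}|^{\bar p}$. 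Now apply exactly the same chain as for $X^i$: elementary inequality, H\"older, Lemma \ref{xishuzengzhangtiaojian}, and the estimate \eqref{2.14} for the fBm integral, to deduce
\[
\mathbb{E}\sup_{s\in[0,t]}|X_{s}^{i,N}|^{\bar p}\le C+C\int_{0}^{t}\mathbb{E}\sup_{u\in[-\tau,r]}|X_{u}^{i,N}|^{\bar p}\,{\rm d}r+C\int_{0}^{t}\mathbb{E}|X_{r-\tau}^{i,N}|^{(l+1)\bar p}\,{\rm d}r.
\]
Gronwall absorbs the middle term and leaves the super-linear delay contribution, which is handled by the identical ladder argument as in Lemma \ref{boundedness of pp}, producing a constant $C$ that is $N$-independent.

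\textbf{Main obstacle.} The essential difficulty is not the fractional integral (Lemma \ref{xishuzengzhangtiaojian} together with the calculation in \eqref{2.14} already handles that) but the super-linear $|X_{r-\tau}|^{l+1}$ term: a naive Gronwall fails because $(l+1)\bar p>\bar p$. The finite ladder of exponents $u_{1}>u_{2}>\dots>u_{[T/\tau]+1}=\bar p$ from the proof of Lemma \ref{boundedness of pp} is exactly what closes this gap, and the only thing to verify for $X^{i,N}$ is that the initial data $\xi$ has moments of order $u_{1}$; this is guaranteed by the standing hypothesis $\bar p\ge\theta\ge 1$ combined with the restriction on $l$ that allows the ladder construction (as in Lemma \ref{boundedness of pp}). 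The $N$-uniformity drops out automatically from exchangeability and the Jensen bound on $\mathbb{W}_{\theta}(\mu_{s}^{X,N},\delta_{0})$.
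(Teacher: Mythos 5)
Your proposal follows essentially the same route as the paper's proof: the identical growth and fractional-integral estimates reduce everything to controlling $\mathbb{E}\big(\mathbb{W}_{\theta}(\mu_{s}^{X,N},\delta_{0})\big)^{\bar p}$, which the paper handles via Minkowski's inequality plus exchangeability where you use Jensen plus exchangeability (an equivalent step yielding the same $N$-independent bound $\mathbb{E}|X_{s}^{i,N}|^{\bar p}$), and both arguments then close the super-linear delay term with the exponent ladder from Lemma \ref{boundedness of pp}. The one caveat you flag --- that the ladder implicitly requires $\xi$ to possess moments of order higher than $\bar p$ on the first interval $[0,\tau]$ --- is a genuine point, but it is equally present in the paper's own proof of Lemma \ref{boundedness of pp}, so your proposal is faithful to the paper's argument.
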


\begin{proof}
For any $t\in[0,T]$, by Lemma \ref{Maodebudengshi}, we get
\begin{align*}
\sup_{s\in[0,t]}|X_{s}^{i,N}|^{\bar{p}}\leq \frac{\lambda}{1-\lambda}\|\xi\|^{\bar{p}}+\frac{1}{1-\lambda}\sup_{s\in[0,t]}|X_{s}^{i,N}-D(X_{s-\tau}^{i,N})|^{\bar{p}}.
\end{align*}
Referring to \eqref{2.14}, by the elementary inequality, the H\"{o}lder inequality and Lemma \ref{xishuzengzhangtiaojian}, we get
\begin{align*}
&\mathbb{E}\bigg(\sup_{s\in[0,t]}|X_{s}^{i,N}-D(X_{s-\tau}^{i,N})|^{\bar{p}}\bigg)\\
\leq &3^{{\bar{p}}-1}\mathbb{E}|\xi^i(0)-D(\xi^i(-\tau))|^{\bar{p}}+3^{{\bar{p}}-1}\mathbb{E}\bigg(\sup_{s\in[0,t]}\bigg\lvert \int_{0}^sb(X_{r}^{i,N},X_{r-\tau}^{i,N},\mu_{r}^{X,N}) {\rm d}r\bigg\rvert^{\bar{p}}\bigg)\\
&+3^{{\bar{p}}-1}\mathbb{E}\bigg(\sup_{s\in[0,t]}\bigg\lvert \int_{0}^s\sigma(\mu_{r}^{X,N}){\rm d}B_{r}^{H,i}\bigg\rvert ^{\bar{p}}\bigg)\\
\leq &C+(3t)^{{\bar{p}}-1}\mathbb{E}\int_{0}^tL^{\bar{p}}\bigg[1+|X_{r}^{i,N}|^{\bar{p}}+|X_{r-\tau}^{i,N}|^{(l+1){\bar{p}}}+(\mathbb{W}_{\theta}(\mu_{r}^{X,N},\delta_{0}))^{\bar{p}}\bigg]{\rm d}r\\
&+3^{{\bar{p}}-1}C\int_{0}^t\parallel\sigma(\mu_{r}^{X,N})\parallel^{\bar{p}}{\rm d}r\\
\leq &C+C\mathbb{E}\int_{0}^t|X_{r}^{i,N}|^{\bar{p}}{\rm d}r+C\mathbb{E}\int_{0}^t|X_{r-\tau}^{i,N}|^{(l+1){\bar{p}}}{\rm d}r+C\int_{0}^t(\mathbb{W}_{\theta}(\mu_{r}^{X,N},\delta_{0}))^{\bar{p}}{\rm d}r\\
\leq &C+C\mathbb{E}\int_{0}^t|X_{r}^{i,N}|^{\bar{p}}{\rm d}r+C\mathbb{E}\int_{0}^t|X_{r-\tau}^{i,N}|^{(l+1){\bar{p}}}{\rm d}r+C\mathbb{E}\int_{0}^t\bigg(\frac{1}{N}\sum_{j=1}^N|X_{r}^{j,N}|^{\theta}\bigg)^{{\bar{p}}/\theta}{\rm d}r,
\end{align*}
where for the last inequality we have used the fact that
\begin{align*}
\mathbb{W}_{\theta}(\mu_{s}^{X,N},\delta_{0})=\mathcal{W}_{\theta}(\mu_{s}^{X,N})=\bigg(\frac{1}{N}\sum_{j=1}^N|X_{s}^{j,N}|^{\theta}\bigg)^{1/\theta}.
\end{align*}
Since all $j$ are identically distributed, by the Minkowski inequality and Lemma \ref{wassersteindistancechazhi}, for the last term, we get
\begin{align*}
\mathbb{E}\bigg(\bigg(\frac{1}{N}\sum_{j=1}^N|X_{s}^{j,N}|^{\theta}\bigg)^{{\bar{p}}/\theta}\bigg)
\leq& \bigg(\frac{1}{N}\sum_{j=1}^N\parallel |X_{s}^{j,N}|^{\theta}\parallel_{L^{{\bar{p}}/\theta}}\bigg)^{{\bar{p}}/\theta}\\
=&\bigg(\frac{1}{N} \sum_{j=1}^N(\mathbb{E}|X_{s}^{j,N}|^{\bar{p}})^{\theta/{\bar{p}}}\bigg)^{{\bar{p}}/\theta}=\mathbb{E}|X_{s}^{i,N}|^{\bar{p}}.
\end{align*}
Thus,
\begin{align*}
\mathbb{E}|X_{t}^{i,N}|^{\bar{p}}
\leq &C+C\mathbb{E}\int_{0}^t|X_{s}^{i,N}|^{\bar{p}}{\rm d}s+C\mathbb{E}\int_{0}^t|X_{s-\tau}^{i,N}|^{(l+1){\bar{p}}}{\rm d}s.
\end{align*}
By the same technique as in Lemma \ref{boundedness of pp}, we can show that
\begin{align*}
\mathbb{E}\bigg(\sup_{t\in[0,T]}|X_{t}^{i,N}|^{\bar{p}}\bigg)\leq C.
\end{align*}
Similarly, we obtain
\begin{align*}
\mathbb{E}\bigg(\sup_{t\in[0,T]}|X_{t}^{i}|^{\bar{p}}\bigg)\leq C.
\end{align*}
This completes the proof.
\end{proof}

\begin{lem}\label{Hdayuerfenzhiyipropagation}
{\rm Let Assumption \ref{duoxiangshizengz} hold. For $p>1/H$ and $(l+1)p\le\bar{p}$, we have
\begin{equation*}
\mathbb{E}\bigg(\sup_{t\in[0,T]}|X_{t}^i-X_{t}^{i,N}|^p\bigg)\leq C\times\left\{\begin{array}{ll}
   N^{-\frac{1}{2}},\ \ p>\frac{d}{2},\\
     N^{-\frac{1}{2}}\log(1+N),\ p=\frac{d}{2}, \\
     N^{-\frac{p}{d}},\ p\in[2,\frac{d}{2}).
\end{array}
\right.
\end{equation*}
where $C$ is a positive constant depending on $p,T,H,L,\theta$ but independent of $N$.}
\end{lem}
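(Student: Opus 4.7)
The plan is to control the difference $X_t^i - X_t^{i,N}$ by subtracting equations \eqref{interactingparticlesystem} and \eqref{noninteractingparticlesystem}, then split the right-hand side into a drift part $J_1$ and a diffusion part $J_2$ via the elementary inequality $|a+b|^p\le 2^{p-1}(|a|^p+|b|^p)$. For $J_1$, I would apply Hölder and the polynomial Lipschitz condition on $b$ from Assumption \ref{duoxiangshizengz}, producing a pure $|X_r^i-X_r^{i,N}|^p$ term, a term of the form $(1+|X_{r-\tau}^i|^l+|X_{r-\tau}^{i,N}|^l)^p|X_{r-\tau}^i-X_{r-\tau}^{i,N}|^p$ (handled via Cauchy--Schwarz together with the uniform moment bound from Lemma \ref{Xityoujie}, since $2pl\le(l+1)p\le\bar p$), and a Wasserstein term $\mathbb{W}_\theta(\mu_r^{X,N},\mathcal L_{X_r^i})^p$. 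For $J_2$, I would reuse the stochastic Fubini / Hölder technique from \eqref{2.14} to obtain a bound of the same Wasserstein type from the Lipschitz condition on $\sigma$.

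The central step is the splitting of the Wasserstein error by the triangle inequality,
\begin{equation*}
\mathbb{W}_\theta(\mu_r^{X,N},\mathcal L_{X_r^i})\le \mathbb{W}_\theta(\mu_r^{X,N},\mu_r^{N})+\mathbb{W}_\theta(\mu_r^{N},\mathcal L_{X_r^i}),
\end{equation*}
where $\mu_r^N=\frac{1}{N}\sum_{j=1}^N\delta_{X_r^j}$ is the empirical measure of the i.i.d.\ non-interacting particles. By Lemma \ref{wassersteindistancechazhi} and exchangeability of $(X^{i,N})_{i=1}^N$, the first summand is bounded in $L^p$ by $\bigl(\mathbb{E}|X_r^i-X_r^{i,N}|^p\bigr)^{1/p}$, which feeds back into the quantity we are estimating and will later be absorbed by Gronwall. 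The second summand is the classical empirical-measure convergence towards the law of an i.i.d.\ sample; invoking the Fournier--Guillin result (which is where the dimension $d$ and the trichotomy $p>d/2$, $p=d/2$, $p<d/2$ enter) together with Lemma \ref{Xityoujie} gives
\begin{equation*}
\mathbb{E}\,\mathbb{W}_\theta(\mu_r^N,\mathcal L_{X_r^i})^p\le C\Phi(N),
\end{equation*}
with $\Phi(N)$ equal to $N^{-1/2}$, $N^{-1/2}\log(1+N)$ or $N^{-p/d}$ in the three regimes.

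Combining $J_1$ and $J_2$ yields an inequality of the form
\begin{equation*}
\mathbb{E}\Bigl(\sup_{s\in[0,t]}|X_s^i-X_s^{i,N}|^p\Bigr)\le C\Phi(N)+C\!\int_0^t\!\mathbb{E}\Bigl(\sup_{u\in[0,r]}|X_u^i-X_u^{i,N}|^p\Bigr)\mathrm{d}r+C\!\int_0^t\!\bigl(\mathbb{E}|X_{r-\tau}^i-X_{r-\tau}^{i,N}|^{2p}\bigr)^{1/2}\mathrm{d}r.
\end{equation*}
On $[0,\tau]$ the delayed term vanishes because $X_s^i=X_s^{i,N}=\xi$ for $s\in[-\tau,0]$, so Gronwall delivers the $\Phi(N)$ rate in $L^p$ immediately. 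I would then propagate this to $[0,2\tau], [0,3\tau],\dots$ by exactly the finite sequence $\{u_i\}$ argument used in the proofs of Lemmas \ref{boundedness of pp} and \ref{existenceanduniquenessofsolution}: at each step the $L^{2p}$-norm on the previous interval is controlled by the $L^{u_i}$-bound, the constant in front of $\Phi(N)$ stays finite after finitely many $[T/\tau]+1$ iterations, and the exponent stabilises at $p$.

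The main obstacle I expect is handling the super-linear factor $(1+|X_{r-\tau}^i|^l+|X_{r-\tau}^{i,N}|^l)$ simultaneously with the delay structure: naive Gronwall on $[0,T]$ fails because the coefficient of $|X_{r-\tau}^i-X_{r-\tau}^{i,N}|^p$ involves a higher $L^{2p}$-norm of the same quantity. The remedy, already mirrored in Lemmas \ref{boundedness of pp} and \ref{existenceanduniquenessofsolution}, is the delay-step induction on the decreasing exponents $u_i=([T/\tau]+2-i)\,p\,2^{[T/\tau]+1-i}$, which bootstraps the $\Phi(N)$ rate interval by interval without degrading it, since on each delay window the ``delay'' term becomes a known quantity from the previous step. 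Once this scheme is in place the stated three-regime rate follows.
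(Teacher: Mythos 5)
Your proposal follows essentially the same route as the paper's proof: the same drift/diffusion splitting with the Cauchy--Schwarz treatment of the super-linear delay factor, the same triangle-inequality decomposition of the Wasserstein error through the empirical measure of the i.i.d.\ non-interacting particles (handled by Lemma \ref{wassersteindistancechazhi} plus exchangeability on one side and Fournier--Guillin \cite{FG15} on the other), and the same Gronwall-plus-delay-window induction to close the estimate. No substantive differences to report.
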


\begin{proof}
Based on the previous discussion, we get
\begin{align*}
  \sup_{s\in[0,t]}|X_{s}^i-X_{s}^{i,N}|^p\leq \frac{1}{(1-\lambda)^p}\sup_{s\in[0,t]}|X_{s}^i-D(X_{s-\tau}^i)-X_{s}^{i,N}+D(X_{s-\tau}^{i,N})|^p.
\end{align*}
For $i=1,2,\cdots,N$, by \eqref{interactingparticlesystem} and \eqref{noninteractingparticlesystem}, we may compute
\begin{align*}
&X_{t}^i-D(X_{t-\tau}^i)-X_{t}^{i,N}+D(X_{t-\tau}^{i,N})\\
=&\int_{0}^t\left[b(X_{s}^i,X_{s-\tau}^i,\mathcal{L}_{X_{s}^i})-b(X_{s}^{i,N},X_{s-\tau}^{i,N},\mu_{s}^{X,N})\right]{\rm d}s+\int_{0}^t\left[\sigma(\mathcal{L}_{X_{s}^i})-\sigma(\mu_{s}^{X,N})\right] {\rm d}B_{s}^{H,i}.
\end{align*}
By the elementary inequality and the H\"{o}lder inequality, Lemma \ref{xishuzengzhangtiaojian}, Lemma \ref{Xityoujie}, and the same technique with \eqref{2.14}, we get
\begin{align*}
&\mathbb{E}\bigg(\sup_{s\in[0,t]}|X_{s}^i-D(X_{s-\tau}^i)-X_{s}^{i,N}+D(X_{s-\tau}^{i,N})|^p\bigg)\\
\leq& 2^{p-1}\mathbb{E}\bigg(\sup_{s\in[0,t]}  \bigg\lvert \int_{0}^s \left[b(X_{r}^i,X_{r-\tau}^i,\mathcal{L}_{X_{r}^i})-b(X_{r}^{i,N},X_{r-\tau}^{i,N},\mu_{r}^{X,N})\right]{\rm d}r\bigg\rvert ^p\bigg)\\
&+2^{p-1}\mathbb{E}\bigg(\sup_{s\in[0,t]}\bigg\lvert \int_{0}^s\left[\sigma(\mathcal{L}_{X_{r}^i})-\sigma(\mu_{r}^{X,N})\right] {\rm d}B_{r}^{H,i}\bigg\rvert^p\bigg)\\
\leq &2^{p-1}t^{p-1}\mathbb{E}\int_{0}^{t}\lvert b(X_{r}^i,X_{r-\tau}^i,\mathcal{L}_{X_{r}^i})-b(X_{r}^{i,N},X_{r-\tau}^{i,N},\mu_{r}^{X,N}) \rvert ^p{\rm d}r\\
&+C\bigg[\int_{0}^{t}\parallel\sigma(\mathcal{L}_{X_{r}^i})-\sigma(\mu_{r}^{X,N})\parallel^{1/H}{\rm d}r\bigg]^{pH}\\
\leq &(2t)^{p-1}\mathbb{E}\int_{0}^{t} L^p\bigg[|X_{r}^i-X_{r}^{i,N}|+(1+|X_{r-\tau}^i|^l+|X_{r-\tau}^{i,N}|^l)|X_{r-\tau}^i-X_{r-\tau}^{i,N}|\\
&+\mathbb{W}_{\theta}(\mathcal{L}_{X_{r}^i},\mu_{r}^{X,N})\bigg]^p{\rm d}r+C\mathbb{E}\int_{0}^{t}(\mathbb{W}_{\theta}(\mathcal{L}_{X_{r}^i},\mu_{r}^{X,N}))^p{\rm d}r\\
\leq &C\mathbb{E}\int_{0}^{t}|X_{r}^i-X_{r}^{i,N}|^p{\rm d}r+C\int_{0}^{t}\mathbb{E}\left(|X_{r-\tau}^i-X_{r-\tau}^{i,N}|^{2p}\right)^{
\frac{1}{2}}{\rm d}r+C\int_{0}^{t}(\mathbb{W}_{\theta}(\mathcal{L}_{X_{r}^i},\mu_{r}^{X,N}))^p{\rm d}r,
\end{align*}
where we have used the fact that $\bar{p}>1/H$. Noticing that
\begin{align*}
\mathbb{W}_{\theta}(\mathcal{L}_{X_{r}^i},\mu_{r}^{X,N})\leq \mathbb{W}_{\theta}(\mathcal{L}_{X_{r}^i},\mu_{r}^{X})+\mathbb{W}_{\theta}(\mu_{r}^{X},\mu_{r}^{X,N}),
\end{align*}
by Lemma \ref{wassersteindistancechazhi} and Lemma \ref{Xityoujie}, we get
\begin{align*}
\mathbb{E}(\mathbb{W}_{\theta}(\mathcal{L}_{X_{r}^i},\mu_{r}^{X,N}))^p= &\mathbb{E}[(\mathbb{W}_{\theta}(\mathcal{L}_{X_{r}^i},\mu_{r}^{X,N}))^{\theta}]^{p/{\theta}}\\
\leq &\mathbb{E}\bigg(2^{\theta-1}(\mathbb{W}_{\theta}(\mathcal{L}_{X_{r}^i},\mu_{r}^{X}))^{\theta}+2^{\theta-1}(\mathbb{W}_{\theta}(\mu_{r}^{X},\mu_{r}^{X,N}))^{\theta}\bigg)^{p/\theta}\\
\leq &C\mathbb{E}(\mathbb{W}_{\theta}(\mathcal{L}_{X_{r}^i},\mu_{r}^{X}))^p+C\mathbb{E}(\mathbb{W}_{\theta}(\mu_{r}^{X},\mu_{r}^{X,N}))^p\\
\leq &C\mathbb{E}\bigg(\frac{1}{N}\sum_{j=1}^N|X_{r}^j-X_{r}^{j,N}|^{\theta}\bigg)^{p/\theta}+C\mathbb{E}(\mathbb{W}_{\theta}(\mathcal{L}_{X_{r}^i},\mu_{r}^{X}))^p\\
\leq &C\mathbb{E}|X_{r}^i-X_{r}^{i,N}|^p+C\mathbb{E}(\mathbb{W}_{\theta}(\mathcal{L}_{X_{r}^i},\mu_{r}^{X}))^p.
\end{align*}
Since $\mathbb{W}_{\theta}(\mu,\nu)\leq \mathbb{W}_{p}(\mu,\nu)$ for $p\geq \theta$, we get
\begin{align*}
&\mathbb{E}\bigg(\sup_{s\in[0,t]}|X_{s}^i-X_{s}^{i,N}|^p\bigg)\\
\leq &C\mathbb{E}\int_{0}^{t}|X_{r}^i-X_{r}^{i,N}|^p{\rm d}r+C\int_{0}^{t}\mathbb{E}\left(|X_{r-\tau}^i-X_{r-\tau}^{i,N}|^{2p}\right)^{\frac{1}{2}}{\rm d}r+C \mathbb{E}\int_{0}^{t}(\mathbb{W}_{p}(\mathcal{L}_{X_{r}^i},\mu_{r}^{X}))^p{\rm d}r.
\end{align*}
Applying the Gronwall inequality, we get
\begin{align*}
\mathbb{E}\bigg(\sup_{s\in[0,t]}|X_{s}^i-X_{s}^{i,N}|^p\bigg)\leq  C\int_{0}^{t}\mathbb{E}\left(|X_{r-\tau}^i-X_{r-\tau}^{i,N}|^{2p}\right)^{\frac{1}{2}}{\rm d}r+C \mathbb{E}\int_{0}^{t}(\mathbb{W}_{p}(\mathcal{L}_{X_{s}^i},\mu_{s}^{X}))^p{\rm d}s.
\end{align*}
Since $(\mathbb{W}_{p}(\mathcal{L}_{X_{s}^i},\mu_{s}^{X}))^p$ is controlled by the Wasserstein distance estimate(\cite{FG15}, Theorem 1), we get
\begin{align*}
\mathbb{E}(\mathbb{W}_{p}(\mathcal{L}_{X_{s}^i},\mu_{s}^{X}))^p \leq &C(\mathbb{E}|X_{s}^i|^{\bar{p}})^{p/{\bar{p}}}\\
 &\times\left\{\begin{array}{ll}
       N^{-\frac{1}{2}}+N^{-\frac{{\bar{p}}-p}{{\bar{p}}}},\ \ p>\frac{d}{2}, {\bar{p}}\neq2p,\\
     N^{-\frac{1}{2}}\log(1+N)+N^{-\frac{{\bar{p}}-p}{{\bar{p}}}},\ p=\frac{d}{2}, {\bar{p}}\neq2p,\\
     N^{-\frac{p}{d}}+N^{-\frac{{\bar{p}}-p}{{\bar{p}}}},\ p\in[2,\frac{d}{2}), {\bar{p}}\neq\frac{d}{d-p}.
 \end{array}
 \right.
\end{align*}
Since we have $2p\le(l+1)p\le\bar{p}$, then, by Lemma \ref{Xityoujie}, the Gronwall inequality, and the same techniques as in Lemma \ref{existenceanduniquenessofsolution}, the result can be shown.
\end{proof}

\section{Strong Convergence Rate}
In this section, we consider the EM approximate solution of \eqref{interactingparticlesystem}. Assume that for any time $T>0$, there exist positive constants $M, m\in \mathbb{N}$ such that $\Delta=\frac{\tau}{m}=\frac{T}{M}$, where $\Delta\in (0,1)$ is the step-size. Consequently, for the time grid $t_{k}=t_{0}+k\Delta$, it holds that $t_{k+1}-\tau =t_{k+1-m}$. For $k=-m,\cdots, 0,$ we set $Y_{t_k}=\xi$ and $t_{0}=0$. We now define the discrete EM scheme for \eqref{interactingparticlesystem}
\begin{equation*}
\left\{\begin{array}{ll}
Y_{t_{k}}^{i,N}=\xi^i(t_{k}), \ \ \ k=-m,-m+1,\cdots,0, \\
Y_{t_{k+1}}^{i,N}-D(Y_{t_{k+1-m}}^{i,N})=Y_{t_{k}}^{i,N}-D(Y_{t_{k-m}}^{i,N})+b(Y_{t_{k}}^{i,N},Y_{t_{k-m}}^{i,N},\mu_{t_{k}}^{Y,N})\Delta\\
\ \ \ \ \ \ \ \ \ \ \ \ \ \ \ \ \ \ \ \ \ \ \ \ \ \ \ \ \ +\sigma(\mu_{t_{k}}^{Y,N})\Delta B_{t_{k}}^{H,i}, \ \ k=0,1,\cdots,M,
\end{array}
\right.
\end{equation*}
where the empirical measures $\mu_{t_{k}}^{Y,N}(\cdot):=\frac{1}{N}\sum\limits_{j=1}^{N}\delta_{Y_{t_{k}}^{j,N}}(\cdot)$ and $\Delta B_{t_{k}}^{H,i}=B_{t_{k+1}}^{H,i}-B_{t_{k}}^{H,i}$. Let
\begin{equation*}
\bar{Y}_{t}^{i,N}=\left\{\begin{array}{ll}
\xi^i(t),\ \ -\tau\leq t\leq 0 , \\
\sum\limits_{k=0}^{[T/\Delta]}Y_{t_{k}}^{i,N} {\rm I}_{[k\Delta,(k+1)\Delta)}(t),\ \ t\geq 0,
\end{array}
\right.
\end{equation*}
and the continuous EM solution writes as
\begin{equation}\label{interactingparticlesystemEMcontinuous}
\left\{\begin{array}{ll}
     Y_{t}^{i,N}=\xi^i(t),\ \ -\tau\leq t\leq 0,  \\
     Y_{t}^{i,N}-D(Y_{t-\tau}^{i,N})=Y_{t_{k}}^{i,N}-D(Y_{t_{k-m}}^{i,N})+\int_{t_{k}}^tb(\bar{Y}_{s}^{i,N},\bar{Y}_{s-\tau}^{i,N},\bar{\mu}_{s}^{Y,N}){\rm d}s+\int_{t_{k}}^t\sigma(\bar{\mu}_{s}^{Y,N}){\rm d}B_{s}^{H,i},
     \\ \ \ \ \ \ \ \ \ \ \ \ \ \ \ \ \ \ \ \ \ \ \ \ \ t\geq 0,
\end{array}
\right.
\end{equation}
where $\bar{\mu}_{s}^{Y,N}:=\frac{1}{N}\sum\limits_{j=1}^N\delta_{\bar{Y}_{t}^{j,N}}(\cdot)$. For $t_{k}\in[-\tau,0]$, $Y_{t_{k}}^{i,N}=\xi^i$, we rewrite \eqref{interactingparticlesystemEMcontinuous} as
\begin{align*}
Y_{t}^{i,N}-D(Y_{t-\tau}^{i,N})=\xi^i(0)-D(\xi^i(-\tau))+\int_{0}^tb(\bar{Y}_{s}^{i,N},\bar{Y}_{s-\tau}^{i,N},\bar{\mu}_{s}^{Y,N}){\rm d}s+\int_{0}^t\sigma(\bar{\mu}_{s}^{Y,N}){\rm d}B_{s}^{H,i} ,\ \ t\geq 0.
\end{align*}
We observe $Y_{t_{k}}^{i,N}=\bar{Y}_{t_{k}}^{i,N}$ for $k=-m,-(m+1),\cdots,M$.

\begin{lem}\label{Hdayuerfenzhiyicontinuousyoujie}
 {\rm Let Assumption \ref{pp} hold. Then, for ${\bar{p}}>1/H$, we have
\begin{align*}
\mathbb{E}\bigg(\sup_{t\in[-\tau,T]}|Y_{t}^{i,N}|^{\bar{p}}\bigg)\leq C,
\end{align*}
where $C$ is a positive constant depending on ${\bar{p}}, H, T, \xi,L$.}
\end{lem}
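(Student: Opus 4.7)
The plan is to follow the same blueprint as the proof of Lemma \ref{Xityoujie}, since the continuous EM solution $Y_{t}^{i,N}$ satisfies an integral equation of essentially the same form as \eqref{interactingparticlesystem}, with the only difference being that the integrands feature the piecewise-constant interpolation $\bar{Y}$ rather than $Y$ itself. Because $\bar{Y}_{t}^{i,N}=Y_{t_{k}}^{i,N}$ on each subinterval $[t_{k},t_{k+1})$, we have the pointwise bound $|\bar{Y}_{s}^{i,N}|\leq \sup_{u\in[-\tau,s]}|Y_{u}^{i,N}|$, so every supremum estimate on $Y$ controls the corresponding one on $\bar{Y}$ automatically.

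First I would raise $|Y_{t}^{i,N}|$ to the $\bar{p}$-th power, take the supremum over $s\in[0,t]$, and split into three terms using the elementary inequality: the initial data term (bounded by $\mathbb{E}\|\xi\|^{\bar{p}}$ since $\xi\in L^{\bar p}$), the drift integral, and the fractional stochastic integral. For the drift term, Hölder's inequality together with Lemma \ref{xishuzengzhangtiaojian} produces
\begin{equation*}
C+C\int_{0}^{t}\mathbb{E}\Bigl[|\bar{Y}_{r}^{i,N}|^{\bar p}+|\bar{Y}_{r-\tau}^{i,N}|^{(l+1)\bar p}+(\mathbb{W}_{\theta}(\bar{\mu}_{r}^{Y,N},\delta_{0}))^{\bar p}\Bigr]\,{\rm d}r.
\end{equation*}
For the fBm integral, I would reuse the stochastic-Fubini/Hölder estimate carried out in \eqref{2.14}, valid because $\bar p H>1$, to reduce it to $C\int_{0}^{t}\|\sigma(\bar{\mu}_{r}^{Y,N})\|^{\bar p}\,{\rm d}r$, which in turn is bounded via Lemma \ref{xishuzengzhangtiaojian} by $C+C\int_{0}^{t}(\mathbb{W}_{\theta}(\bar{\mu}_{r}^{Y,N},\delta_{0}))^{\bar p}\,{\rm d}r$.

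Next, I would handle the empirical measure exactly as in Lemma \ref{Xityoujie}: since $\mathbb{W}_{\theta}(\bar{\mu}_{r}^{Y,N},\delta_{0})=(\frac{1}{N}\sum_{j=1}^{N}|\bar{Y}_{r}^{j,N}|^{\theta})^{1/\theta}$ and the particles are identically distributed, Minkowski plus Lemma \ref{wassersteindistancechazhi} gives $\mathbb{E}(\mathbb{W}_{\theta}(\bar{\mu}_{r}^{Y,N},\delta_{0}))^{\bar p}\leq \mathbb{E}|\bar{Y}_{r}^{i,N}|^{\bar p}$. Dominating every appearance of $|\bar{Y}_{u}^{i,N}|$ by $\sup_{v\in[-\tau,u]}|Y_{v}^{i,N}|$, I arrive at the delayed Gronwall-type inequality
\begin{equation*}
\mathbb{E}\Bigl(\sup_{s\in[0,t]}|Y_{s}^{i,N}|^{\bar p}\Bigr)\leq C+C\int_{0}^{t}\mathbb{E}\Bigl(\sup_{u\in[-\tau,r]}|Y_{u}^{i,N}|^{\bar p}\Bigr){\rm d}r+C\int_{0}^{t}\mathbb{E}|Y_{r-\tau}^{i,N}|^{(l+1)\bar p}\,{\rm d}r.
\end{equation*}

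The main obstacle, as in Lemma \ref{boundedness of pp} and Lemma \ref{Xityoujie}, is the super-linearly-growing delayed term $|Y_{r-\tau}^{i,N}|^{(l+1)\bar p}$, whose exponent is higher than what appears on the left. I would overcome this by the same stepwise bootstrap: choose the finite sequence $u_{i}=([T/\tau]+2-i)\bar p(l+1)^{[T/\tau]+1-i}$, so that $u_{[T/\tau]+1}=\bar p$ and $(l+1)u_{i+1}<u_{i}$. On $[0,\tau]$ the delayed term is $|\xi|^{(l+1)u_{1}}$, hence bounded since $\xi\in L^{\bar p}$; assuming a uniform $u_{i}$-bound on $[0,i\tau]$, Hölder converts the $(l+1)u_{i+1}$ delay integrand on $[0,(i+1)\tau]$ into the known $u_{i}$-bound, and a standard Gronwall closes the step. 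Iterating $[T/\tau]+1$ times yields the desired $\bar p$-th moment bound on $[0,T]$, and combining with the constant value $\xi$ on $[-\tau,0]$ gives the conclusion.
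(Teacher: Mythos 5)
Your proposal follows essentially the same route as the paper: split off the initial datum, bound the drift integral by H\"older's inequality and Lemma \ref{xishuzengzhangtiaojian}, treat the fractional stochastic integral with the estimate of \eqref{2.14}, control the empirical-measure term by the Minkowski argument of Lemma \ref{Xityoujie}, and close the super-linear delay term with the stepwise $u_i$-bootstrap of Lemma \ref{boundedness of pp}. The paper's own proof is just a terser version of this (it cites ``the same steps as in Lemma \ref{Xityoujie}''), and your write-up correctly fills in those steps, including the same implicit reliance on sufficient integrability of $\xi$ that the paper itself assumes.
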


\begin{proof}
As previous, for $t\in[0,T]$, we know that
\begin{align*}
\sup_{s\in[0,t]}|Y_{s}^{i,N}|^{\bar{p}}\leq \frac{\lambda}{1-\lambda}\|\xi^i\|^{\bar{p}}+\frac{1}{(1-\lambda)^p}\sup_{s\in[0,t]}|Y_{s}^{i,N}-D(Y_{s-\tau}^{i,N})|^{\bar{p}}
\end{align*}
By the elementary inequality, the H\"{o}lder inequality, \eqref{interactingparticlesystemEMcontinuous}, Lemma \ref{xishuzengzhangtiaojian}, and using the techniques of \eqref{2.14}, we get
\begin{align*}
&\mathbb{E}\bigg(\sup_{s\in[0,t]}|Y_{s}^{i,N}-D(Y_{s-\tau}^{i,N})|^{\bar{p}}\bigg)\\
\leq &3^{{\bar{p}}-1}\mathbb{E}|\xi^{i}(0)-D(\xi^i(-\tau))|^{\bar{p}}+3^{{\bar{p}}-1}\mathbb{E}\bigg(\sup_{s\in[0,t]}\bigg\lvert \int_{0}^sb(\bar{Y}_{r}^{i,N},\bar{Y}_{r-\tau}^{i,N},\bar{\mu}_{r}^{Y,N}){\rm d}r\bigg\rvert^{\bar{p}}\bigg)\\
&+3^{{\bar{p}}-1}\mathbb{E}\bigg(\sup_{s\in[0,t]}\bigg\lvert \int_{0}^s\sigma(\bar{\mu}_{r}^{Y,N}){\rm d}B_{r}^{H,i}\bigg\rvert^{\bar{p}}\bigg)\\
\leq &C+C\mathbb{E}\int_{0}^t|\bar{Y}_{r}^{i,N}|^{\bar{p}}{\rm d}r+C\mathbb{E}\int_{0}^t|\bar{Y}_{r-\tau}^{i,N}|^{(l+1){\bar{p}}}{\rm d}r\\
\leq &C+C\int_{0}^t\sup_{0\leq s\leq r}\mathbb{E}|Y_{s}^{i,N}|^{\bar{p}}{\rm d}r+C\int_{0}^t\sup_{0\leq s\leq r}\mathbb{E}|Y_{s-\tau}^{i,N}|^{(l+1){\bar{p}}}{\rm d}r.
\end{align*}
 The desired assertion can be archived by the same steps as in Lemma \ref{Xityoujie}.
\end{proof}

\begin{lem}\label{Yjianbar{Y}}
{\rm Let Assumption \ref{duoxiangshizengz} hold. For $p>1/H$ and $(l+1)p\le\bar{p}$, then
\begin{align*}
\mathbb{E}\bigg(\sup_{k\in[-m,M]}\sup_{t\in[t_{k},t_{k+1}]}|Y_{t}^{i,N}-\bar{Y}_{t}^{i,N}|^p\bigg)\leq C\Delta^{pH},
\end{align*}
where $C$ is a positive constant that depends on $H, p, L, \lambda$ but independent of $\Delta$.}
\end{lem}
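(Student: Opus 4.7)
\textbf{Proof plan for Lemma \ref{Yjianbar{Y}}.} The plan is to reduce the lemma to estimating $Y_t^{i,N}-Y_{t_k}^{i,N}$ on one generic step interval $[t_k,t_{k+1}]$, and then pass to the supremum by noting that all estimates are uniform in $k$. First, for $t\in[-\tau,0]$ both $Y_t^{i,N}$ and $\bar{Y}_t^{i,N}$ equal $\xi$, so the difference vanishes. For $t\in[t_k,t_{k+1}]$ with $k\ge 0$, by the defining equation \eqref{interactingparticlesystemEMcontinuous} and the identity $\bar{Y}_t^{i,N}=Y_{t_k}^{i,N}$ on this interval, I write
\begin{equation*}
Y_t^{i,N}-\bar{Y}_t^{i,N}=\int_{t_k}^{t}b\bigl(\bar{Y}_s^{i,N},\bar{Y}_{s-\tau}^{i,N},\bar{\mu}_s^{Y,N}\bigr){\rm d}s+\int_{t_k}^{t}\sigma\bigl(\bar{\mu}_s^{Y,N}\bigr){\rm d}B_s^{H,i}.
\end{equation*}

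Next I split the $p$-th moment by the elementary inequality into a drift piece and a fBM piece. For the drift I apply H\"older's inequality to pull out a factor $(t-t_k)^{p-1}\le\Delta^{p-1}$, then invoke the growth bound of Lemma \ref{xishuzengzhangtiaojian} to control the integrand by $1+|\bar{Y}_s^{i,N}|+|\bar{Y}_{s-\tau}^{i,N}|^{l+1}+\mathbb{W}_\theta(\bar{\mu}_s^{Y,N},\delta_0)$. Using Lemma \ref{wassersteindistancechazhi} (as in Lemma \ref{Xityoujie}) the Wasserstein term is dominated by the $\theta$-moments, and the hypothesis $(l+1)p\le\bar p$ together with Lemma \ref{Hdayuerfenzhiyicontinuousyoujie} yields a bound of order $\Delta^{p}$ uniformly in $k$ and $i$. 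For the fBM piece I repeat the Fubini--H\"older trick used in \eqref{2.14}: choose $\lambda\in(1-H,1-1/p)$, rewrite the integral via the kernel $(s-r)^{\lambda-1}$, apply the stochastic Fubini theorem, and then H\"older's inequality in the variable $s$ (over an interval of length $\Delta$). This converts the estimate into an integral of $\|\sigma(\bar{\mu}_r^{Y,N})\|^p$ times $\Delta^{pH-1}$, and Lemma \ref{xishuzengzhangtiaojian} plus Lemma \ref{Hdayuerfenzhiyicontinuousyoujie} close this bound by $C\Delta^{pH}$.

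Combining the two pieces, since $\Delta\in(0,1)$ and $H<1$ give $\Delta^{p}\le\Delta^{pH}$, I obtain
\begin{equation*}
\mathbb{E}\sup_{t\in[t_k,t_{k+1}]}\bigl|Y_t^{i,N}-\bar{Y}_t^{i,N}\bigr|^p\le C\Delta^{pH}
\end{equation*}
with a constant $C=C(H,p,L,T,\xi)$ that does not depend on $k$, $i$ or $N$. Taking the maximum over the finitely many intervals preserves this bound (the constant is uniform in $k$), which gives the claimed estimate.

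The main obstacle will be the fBM stochastic integral on the short interval $[t_k,t_{k+1}]$: the Fubini--H\"older representation must be set up on this subinterval rather than on $[0,t]$, and the admissible range $\lambda\in(1-H,1-1/p)$ (which requires $pH>1$) is exactly where the hypothesis $p>1/H$ is consumed. Everything else is a uniform application of the moment bound from Lemma \ref{Hdayuerfenzhiyicontinuousyoujie} together with the polynomial growth of $b$ under $(l+1)p\le\bar p$.
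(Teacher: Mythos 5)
Your proposal follows essentially the same route as the paper's proof: the same decomposition of $Y_t^{i,N}-\bar{Y}_t^{i,N}$ into drift and fractional-integral pieces on $[t_k,t_{k+1}]$, H\"older for the drift, the Fubini--H\"older trick of \eqref{2.14} with $\lambda\in(1-H,1-1/p)$ for the stochastic integral, and the moment bounds of Lemmas \ref{wassersteindistancechazhi} and \ref{Hdayuerfenzhiyicontinuousyoujie} under $(l+1)p\le\bar{p}$ to close the estimate at $C\Delta^{pH}$. The only caveat (shared with the paper, which also passes from a per-interval bound to the supremum over $k$ without comment) is that interchanging $\mathbb{E}$ with $\sup_k$ is not literally free, but your argument is otherwise the paper's argument.
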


\begin{proof}
By Lemma \ref{Maodebudengshi}, for any $t\in[t_{k},t_{k+1}]$, we get
\begin{align*}
\sup_{t\in[t_{k},t_{k+1}]}|Y_{t}^{i,N}-\bar{Y}_{t}^{i,N}|^p\leq \frac{1}{(1-\lambda)^p}\sup_{t\in[t_{k},t_{k+1}]}|Y_{t}^{i,N}-D(Y_{t-\tau}^{i,N})-\bar{Y}_{t}^{i,N}+D(\bar{Y}_{t-\tau}^{i,N})|^p
\end{align*}
For $t\in[t_{k},t_{k+1}]$, we may compute
\begin{align*}
&\mathbb{E}\bigg(\sup_{t\in[t_{k},t_{k+1}]}|Y_{t}^{i,N}-D(Y_{t-\tau}^{i,N})-\bar{Y}_{t}^{i,N}+D(\bar{Y}_{t-\tau}^{i,N})|^p\bigg)\\
=&\mathbb{E}\bigg(\sup_{t\in[t_{k},t_{k+1}]}\bigg\lvert \int_{t_{k}}^tb(\bar{Y}_{s}^{i,N},\bar{Y}_{s-\tau}^{i,N},\bar{\mu}_{s}^{Y,N}){\rm d}s+\int_{t_{k}}^t\sigma(\bar{\mu}_{s}^{Y,N}){\rm d}B_{s}^{H,i}\bigg\rvert ^p\bigg)\\
\leq& 2^{p-1}\mathbb{E}\bigg(\sup_{t\in[t_{k},t_{k+1}]}\bigg\lvert  \int_{t_{k}}^tb(\bar{Y}_{s}^{i,N},\bar{Y}_{s-\tau}^{i,N},\bar{\mu}_{s}^{Y,N}){\rm d}s\bigg\rvert^p\bigg)+2^{p-1}\mathbb{E}\bigg(\sup_{t\in[t_{k},t_{k+1}]}\bigg\lvert \int_{t_{k}}^t\sigma(\bar{\mu}_{s}^{Y,N}){\rm d}B_{s}^{H,i}\bigg\rvert ^p\bigg)\\
=&I_{1}+I_{2},
\end{align*}
where
\begin{align*}
& I_{1}=2^{p-1}\mathbb{E}\bigg(\sup_{t\in[t_{k},t_{k+1}]}\bigg\lvert  \int_{t_{k}}^tb(\bar{Y}_{s}^{i,N},\bar{Y}_{s-\tau}^{i,N},\bar{\mu}_{s}^{Y,N}){\rm d}s\bigg\rvert^p\bigg),\\
&I_{2}=2^{p-1}\mathbb{E}\bigg(\sup_{t\in[t_{k},t_{k+1}]}\bigg\lvert \int_{t_{k}}^t\sigma(\bar{\mu}_{s}^{Y,N}){\rm d}B_{s}^{H,i}\bigg\rvert ^p\bigg).
\end{align*}
By Lemma \ref{xishuzengzhangtiaojian} and the H\"{o}lder inequality, we get
\begin{align*}
I_{1}\leq& 2^{p-1}\Delta^{p-1}\mathbb{E}\int_{t_{k}}^{t_{k+1}}|b(\bar{Y}_{s}^{i,N},\bar{Y}_{s-\tau}^{i,N},\bar{\mu}_{s}^{Y,N})|^p{\rm d}s\\
\leq& 2^{p-1}\Delta^{p-1}\mathbb{E}\int_{t_{k}}^{t_{k+1}}L^p(1+|\bar{Y}_{s}^{i,N}|^p+|\bar{Y}_{s-\tau}^{i,N}|^{(l+1)p}+(\mathbb{W}_{\theta}(\bar{\mu}_{s}^{Y,N},\delta_{0}))^p)\\
\leq& CL^p\Delta^p+C\Delta^{p-1}\int_{t_{k}}^{t_{k+1}}\mathbb{E}|\bar{Y}_{s}^{i,N}|^p{\rm d}s+C\Delta^{p-1}\int_{t_{k}}^{t_{k+1}}\mathbb{E}|\bar{Y}_{s-\tau}^{i,N}|^{(l+1)p}{\rm d}s\\
&+C\Delta^{p-1}\int_{t_{k}}^{t_{k+1}}\mathbb{E}(\mathbb{W}_{\theta}(\bar{\mu}_{s}^{Y,N},\delta_{0}))^p{\rm d}s.
\end{align*}
Similar with Lemma \ref{boundedness of pp}, by the H\"{o}lder inequality, the Kahane-Khintchine formula, the Fubini theorem and Lemma \ref{xishuzengzhangtiaojian}, we get
\begin{align*}
I_{2}&\leq \frac{C(\lambda)^{-p}(p-1)^{p-1}}{(p-1-\lambda p)^{p-1}}\Delta^{p-1-\lambda p}\int_{t_{k}}^{t_{k+1}}\bigg(\int_{0}^r(r-s)^{(\lambda-1)/H}\parallel\sigma(\bar{\mu}_{s}^{Y,N})\parallel^{1/H}{\rm d}s\bigg)^{pH}{\rm d}r\\
&\leq C \Delta^{p-1-\lambda p}\Delta^{p(\lambda+H-1)}\int_{t_{k}}^{t_{k+1}}\parallel\sigma(\bar{\mu}_{s}^{Y,N})\parallel^p{\rm d}s\\
&\leq C \Delta^{pH-1}\int_{t_{k}}^{t_{k+1}}\mathbb{E}(1+\mathbb{W}_{\theta}(\bar{\mu}_{s}^{Y,N},\delta_{0}))^p{\rm d}s\\
&\leq C\Delta^{pH}+C\Delta^{pH-1}\int_{t_{k}}^{t_{k+1}}\mathbb{E}(\mathbb{W}_{\theta}(\bar{\mu}_{s}^{Y,N},\delta_{0}))^p{\rm d}s.
\end{align*}
Combining $I_{1}$ and $I_{2}$, and using Lemma \ref{wassersteindistancechazhi}, we get
\begin{align*}
&\mathbb{E}\bigg(\sup_{t\in[t_{k},t_{k+1}]}|Y_{t}^{i,N}-\bar{Y}_{t}^{i,N}|^p\bigg)\\
\leq &CL^p\Delta^p+C\Delta^{p-1}\int_{t_{k}}^{t_{k+1}}\mathbb{E}|\bar{Y}_{s}^{i,N}|^p{\rm d}s+C\Delta^{p-1}\int_{t_{k}}^{t_{k+1}}\mathbb{E}|\bar{Y}_{s-\tau}^{i,N}|^{(l+1)p}{\rm d}s\\
&+C\Delta^{p-1}\int_{t_{k}}^{t_{k+1}}\mathbb{E}|\bar{Y}_{s}^{i,N}|^p{\rm d}s+C\Delta^{pH}+C\Delta^{pH-1}\int_{t_{k}}^{t_{k+1}}\mathbb{E}|\bar{Y}_{s}^{i,N}|^p{\rm d}s\\
\leq &C\Delta^{pH}+C\Delta^{pH-1}\int_{t_{k}}^{t_{k+1}}\mathbb{E}|\bar{Y}_{s}^{i,N}|^p{\rm d}s+C\Delta^{pH-1}\int_{t_{k}}^{t_{k+1}}\mathbb{E}|\bar{Y}_{s-\tau}^{i,N}|^{(l+1)p}{\rm d}s\\
\leq &C\Delta^{pH}+C\Delta^{pH-1}\int_{t_{k}}^{t_{k+1}}\sup_{t_{k}\leq u\leq s}\mathbb{E}|Y_{u}^{i,N}|^p{\rm d}s+C\Delta^{pH-1}\int_{t_{k}}^{t_{k+1}}\sup_{t_{k}\leq u\leq s}\mathbb{E}|Y_{u-\tau}^{i,N}|^{(l+1)p}{\rm d}s.
\end{align*}
Since $(l+1)p\le\bar{p}$, we immediately derive from Lemma \ref{Hdayuerfenzhiyicontinuousyoujie} that
\begin{align*}
\mathbb{E}\bigg(\sup_{t\in[t_{k},t_{k+1}]}|Y_{t}^{i,N}-\bar{Y}_{t}^{i,N}|^p\bigg)
\leq  C\Delta^{pH}.
\end{align*}
This completes the proof.
\end{proof}

\begin{lem}\label{X-Ypjieju}
{\rm Let Assumption \ref{duoxiangshizengz} hold. For $p>1/H$ and $(l+1)p\le\bar{p}$, we have
\begin{align*}
 \mathbb{E}\bigg(\sup_{t\in[-\tau, T]}|X_{t}^{i,N}-Y_{t}^{i,N}|^p\bigg)\leq C\Delta^{pH},
\end{align*}
where $C$ is a positive constant that depends on $H, p, L, \lambda$ but independent of $\Delta$.}
\end{lem}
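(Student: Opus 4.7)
The plan is to mimic the error analysis used in Theorem \ref{existenceanduniquenessofsolution} and Lemma \ref{Hdayuerfenzhiyipropagation}, using Lemma \ref{Yjianbar{Y}} as the key new ingredient to convert quantities involving $\bar{Y}^{i,N}$ into quantities involving $Y^{i,N}$ at cost $\Delta^{pH}$. Subtracting \eqref{interactingparticlesystem} from \eqref{interactingparticlesystemEMcontinuous} and using the elementary inequality, I would write
\begin{align*}
\mathbb{E}\bigg(\sup_{s\in[0,t]}|X_{s}^{i,N}-Y_{s}^{i,N}|^{p}\bigg)
\le 2^{p-1}(J_{1}+J_{2}),
\end{align*}
where $J_{1}$ is the drift contribution
$$
J_{1}=\mathbb{E}\bigg(\sup_{s\in[0,t]}\bigg|\int_{0}^{s}\bigl[b(X_{r}^{i,N},X_{r-\tau}^{i,N},\mu_{r}^{X,N})-b(\bar{Y}_{r}^{i,N},\bar{Y}_{r-\tau}^{i,N},\bar{\mu}_{r}^{Y,N})\bigr]{\rm d}r\bigg|^{p}\bigg),
$$
and $J_{2}$ is the corresponding diffusion term treated by the stochastic Fubini/H\"older device of \eqref{2.14}.

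First I would handle $J_{1}$ using the H\"older inequality and Assumption \ref{duoxiangshizengz}, getting three terms: $|X_{r}^{i,N}-\bar{Y}_{r}^{i,N}|$, the super-linear delay term $(1+|X_{r-\tau}^{i,N}|^{l}+|\bar{Y}_{r-\tau}^{i,N}|^{l})|X_{r-\tau}^{i,N}-\bar{Y}_{r-\tau}^{i,N}|$, and $\mathbb{W}_{\theta}(\mu_{r}^{X,N},\bar{\mu}_{r}^{Y,N})$. Then I would insert and subtract $Y$ and use Lemma \ref{Yjianbar{Y}} to replace every $\bar{Y}$ by $Y$ at the cost of an additive $C\Delta^{pH}$. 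For the super-linear piece I would apply H\"older with conjugate exponents $(2,2)$ (so the $l$-power gets $2p$-moments, permissible because $2pl\le(l+1)p\le\bar p$, and the difference gets $2p$-moments) combined with Lemma \ref{Xityoujie} and Lemma \ref{Hdayuerfenzhiyicontinuousyoujie}. For the Wasserstein term, Lemma \ref{wassersteindistancechazhi} gives
$$
\mathbb{W}_{\theta}(\mu_{r}^{X,N},\bar{\mu}_{r}^{Y,N})\le\bigg(\frac{1}{N}\sum_{j=1}^{N}|X_{r}^{j,N}-\bar{Y}_{r}^{j,N}|^{\theta}\bigg)^{1/\theta},
$$
whose $p$-th moment is estimated, after the Minkowski inequality and the identically-distributed property, by $\mathbb{E}|X_{r}^{i,N}-\bar{Y}_{r}^{i,N}|^{p}$, which in turn is bounded by $\mathbb{E}|X_{r}^{i,N}-Y_{r}^{i,N}|^{p}+C\Delta^{pH}$ via Lemma \ref{Yjianbar{Y}}. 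For $J_{2}$ I would reuse the computation leading to \eqref{2.14}, ending up with a bound proportional to $\int_{0}^{t}\parallel\sigma(\mu_{r}^{X,N})-\sigma(\bar{\mu}_{r}^{Y,N})\parallel^{p}{\rm d}r$, which by the Lipschitz estimate on $\sigma$ and the same Wasserstein bound produces the same type of terms as in $J_{1}$.

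Putting the estimates together I expect to reach, for $t\in[0,T]$,
\begin{align*}
\mathbb{E}\bigg(\sup_{s\in[0,t]}|X_{s}^{i,N}-Y_{s}^{i,N}|^{p}\bigg)\le C\Delta^{pH}+C\int_{0}^{t}\mathbb{E}\bigg(\sup_{u\in[0,r]}|X_{u}^{i,N}-Y_{u}^{i,N}|^{p}\bigg){\rm d}r+C\int_{0}^{t}\bigl(\mathbb{E}|X_{r-\tau}^{i,N}-Y_{r-\tau}^{i,N}|^{2p}\bigr)^{1/2}{\rm d}r,
\end{align*}
after which the Gronwall inequality eliminates the first integral.

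The main obstacle, as anticipated from the proof of Theorem \ref{existenceanduniquenessofsolution}, is the delayed super-linear term, which after the H\"older bound involves the $2p$-moment of the delayed difference. I would resolve this by the same finite-sequence induction used in Lemma \ref{boundedness of pp} and in Step 1 of Theorem \ref{existenceanduniquenessofsolution}: define $u_{i}=([T/\tau]+2-i)p\,2^{[T/\tau]+1-i}$ so that $u_{[T/\tau]+1}=p$ and $2u_{i+1}<u_{i}$, prove the claim on $[0,\tau]$ with exponent $u_{1}$ (where the delayed term uses only the deterministic initial data and is therefore zero), and then propagate across each successive $\tau$-interval, each time halving the exponent and using the H\"older inequality so that the $2u_{i+1}$-moment estimate on the previous interval feeds the $u_{i}$-estimate on the next. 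Because the constant $C\Delta^{pH}$ is preserved under each step (a power of a bound of the form $C\Delta^{pH}$ remains of the same form), after at most $[T/\tau]+1$ iterations I would obtain the desired $\Delta^{pH}$ rate uniformly on $[-\tau,T]$.
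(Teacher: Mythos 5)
Your proposal follows essentially the same route as the paper's proof: the same drift/diffusion decomposition, the insertion of $Y$ between $X$ and $\bar{Y}$ with Lemma \ref{Yjianbar{Y}} supplying the $C\Delta^{pH}$ cost, the Hölder $(2,2)$ split for the super-linear delay term, the empirical-measure Wasserstein bound via Lemma \ref{wassersteindistancechazhi}, and the Gronwall-plus-$\tau$-interval induction to absorb the $2p$-moment of the delayed difference. In fact you spell out the final induction step more explicitly than the paper, which merely invokes ``the techniques as in Theorem \ref{existenceanduniquenessofsolution}.''
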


\begin{proof}
Similarly, by Lemma \ref{Maodebudengshi}, we derive that
\begin{align*}
\sup_{s\in[0,t]}|X_{s}^{i,N}-Y_{s}^{i,N}|^p\leq \frac{1}{(1-\lambda)^p}\sup_{s\in[0,t]}|X_{s}^{i,N}-D(X_{s-\tau}^{i,N})-Y_{s}^{i,N}+D(Y_{s-\tau}^{i,N})|^p.
\end{align*}
 By the elementary inequality, the H\"{o}lder inequality, the stochastic Fubini theorem and the Kahane-Khintchine formula and Assumption \ref{duoxiangshizengz}, we may compute
 \begin{align*}
&\mathbb{E}\bigg(\sup_{s\in[0,t]}|X_{s}^{i,N}-D(X_{s-\tau}^{i,N})-Y_{s}^{i,N}+D(Y_{s-\tau}^{i,N})|^p\bigg)\\
=&\mathbb{E}\bigg(\sup_{s\in[0,t]}\bigg\lvert \int_{0}^s(b(X^{i,N}_{r},X^{i,N}_{r-\tau},\mu^{X,N}_{r})-b(\bar{Y}_{r}^{i,N},\bar{Y}_{r-\tau}^{i,N},\bar{\mu}_{r}^{Y,N})) {\rm d}r+\int_{0}^s (\sigma(\mu^{X,N}_{r})-\sigma(\bar{\mu}_{r}^{Y,N})){\rm d}B^{H,i}_r\bigg\rvert^p\bigg) \\
\leq &2^{p-1}t^{p-1}\int_{0}^t\bigg\lvert b(X^{i,N}_{r},X^{i,N}_{r-\tau},\mu^{X,N}_{r})-b(\bar{Y}_{r}^{i,N},\bar{Y}_{r-\tau}^{i,N},\bar{\mu}_{r}^{Y,N})\bigg\rvert ^p{\rm d}r\\
&+2^{p-1}C\int_{0}^t\parallel\sigma(\mu^{X,N}_{r})-\sigma(\bar{\mu}_{r}^{Y,N})\parallel^p{\rm d}r\\
\leq &C\mathbb{E}\int_{0}^t\bigg[|X^{i,N}_{r}-\bar{Y}_{r}^{i,N}|+(1+|X^{i,N}_{r-\tau}|^l+|\bar{Y}_{r-\tau}^{i,N}|^l)|X^{i,N}_{r-\tau}-\bar{Y}_{r-\tau}^{i,N}|
+\mathbb{W}_{\theta}(\mu^{X,N}_{r},\bar{\mu}_{r}^{Y,N})\bigg]^p{\rm d}r\\
&+C\mathbb{E}\int_{0}^t(\mathbb{W}_{\theta}(\mu^{X,N}_{s},\bar{\mu}_{s}^{Y,N}))^p{\rm d}s.
 \end{align*}
 Noticing that $ X_{r}^{i,N}-\bar{Y}_{r}^{i,N}= X_{r}^{i,N}-Y_{r}^{i,N}+ Y_{r}^{i,N}-\bar{Y}_{r}^{i,N},$
 and
 \begin{align*}
 \mathbb{W}_{\theta}(\mu^{X,N}_{r},\bar{\mu}_{r}^{Y,N})=\bigg(\frac{1}{N}\sum_{j=1}^N|X^{i,N}_{r}-\bar{Y}_{r}^{i,N}|^{\theta}\bigg)^{1/\theta},
 \end{align*}
then, by Lemmas \ref{wassersteindistancechazhi}, \ref{Xityoujie}, \ref{Hdayuerfenzhiyicontinuousyoujie}, \ref{Yjianbar{Y}}, we get
 \begin{align*}
&\mathbb{E}\bigg(\sup_{s\in[0,t]}|X_{s}^{i,N}-Y_{s}^{i,N}|^p\bigg)\\
%\leq &C\mathbb{E}\int_{0}^t|X^{i,N}_{r}-\bar{Y}_{r}^{i,N}|^p{\rm d}r+C\mathbb{E}\int_{0}^t\left(|X_{r-\tau}^{i,N}-\bar{Y}_{r-\tau}^{i,N}|^{2p}\right)^{\frac{1}{2}}{\rm d}r
%+C\mathbb{E}\int_{0}^t(\mathbb{W}_{\theta}(\mu^{X,N}_{r},\bar{\mu}_{r}^{Y,N}))^p{\rm d}r\\
%\leq &C\mathbb{E}\int_{0}^t|X^{i,N}_{r}-\bar{Y}_{r}^{i,N}|^p{\rm d}r+C\mathbb{E}\int_{0}^t\left(|X_{r-\tau}^{i,N}-\bar{Y}_{r-\tau}^{i,N}|^{2p}\right)^{\frac{1}{2}}{\rm d}r+C\mathbb{E}\int_{0}^t\bigg(\frac{1}{N}\sum_{j=1}^N|X^{i,N}_{r}-\bar{Y}_{r}^{i,N}|^{\theta}\bigg)^{p/\theta}{\rm d}r\\
\leq &C\Delta^{pH}+C\mathbb{E}\int_{0}^t|X^{i,N}_{r}-Y_{r}^{i,N}|^p{\rm d}r+C\int_{0}^t\mathbb{E}\left(|X^{i,N}_{r-\tau}-Y_{r-\tau}^{i,N}|^{2p}\right)^{\frac{1}{2}}{\rm d}r.
 \end{align*}
Applying the Gronwall inequality and using the techniques as in Theorem \ref{existenceanduniquenessofsolution}, we get
\begin{align*}
&\mathbb{E}\bigg(\sup_{t\in[0,T]}|X_{t}^{i,N}-Y_{t}^{i,N}|^p\bigg)
\leq C\Delta^{pH}.
\end{align*}
This completes the proof.
\end{proof}

\begin{thm}
{\rm Let Assumption \ref{duoxiangshizengz} hold. For $p>1/H$ and $(l+1)p\le\bar{p}$, it holds that
\begin{align*}
\mathbb{E}\bigg(\sup_{t\in[0,T]}|X_{t}^i-Y_{t}^{i.N}|^p\bigg)\leq C\times\left\{
\begin{array}{ll}
 N^{-\frac{1}{2}}+\Delta^{p H},\ \ p>\frac{d}{2},\\
     N^{-\frac{1}{2}}\log(1+N)+\Delta^{p H},\ p=\frac{d}{2}, \\
     N^{-\frac{p}{d}}+\Delta^{p H},\ p\in[2,\frac{d}{2}),
\end{array}
\right.
\end{align*}
where $C$ is a constant independent of $N$ and $\Delta$.}
\end{thm}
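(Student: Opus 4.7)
The plan is to combine the two pillars that have already been set up: the propagation-of-chaos estimate (Lemma \ref{Hdayuerfenzhiyipropagation}) and the numerical error estimate between the interacting particle system and its continuous EM approximation (Lemma \ref{X-Ypjieju}). The key observation is that the theorem splits naturally via the triangle inequality into a term that measures how well the non-interacting particle $X_t^i$ (which drives the McKean-Vlasov equation) is approximated by the interacting particle $X_t^{i,N}$, and a term that measures how well $X_t^{i,N}$ is approximated by its discrete/continuous EM counterpart $Y_t^{i,N}$.

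First, I would write
\begin{equation*}
|X_t^i - Y_t^{i,N}| \leq |X_t^i - X_t^{i,N}| + |X_t^{i,N} - Y_t^{i,N}|,
\end{equation*}
apply the elementary inequality $|a+b|^p \leq 2^{p-1}(|a|^p + |b|^p)$, then take the supremum over $t\in[0,T]$ inside the expectation to obtain
\begin{equation*}
\mathbb{E}\bigg(\sup_{t\in[0,T]}|X_t^i - Y_t^{i,N}|^p\bigg) \leq 2^{p-1}\mathbb{E}\bigg(\sup_{t\in[0,T]}|X_t^i - X_t^{i,N}|^p\bigg) + 2^{p-1}\mathbb{E}\bigg(\sup_{t\in[0,T]}|X_t^{i,N} - Y_t^{i,N}|^p\bigg).
\end{equation*}

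Next, I would bound the first summand by invoking Lemma \ref{Hdayuerfenzhiyipropagation}, which under the hypotheses $p>1/H$ and $(l+1)p\le \bar{p}$ delivers precisely the three case-dependent rates $N^{-1/2}$, $N^{-1/2}\log(1+N)$, $N^{-p/d}$ according to the relationship between $p$ and $d/2$. The second summand is controlled by Lemma \ref{X-Ypjieju}, yielding the contribution $C\Delta^{pH}$ uniformly in the three cases. Adding the two estimates and absorbing constants gives the claimed bound in each of the three regimes.

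Since both Lemma \ref{Hdayuerfenzhiyipropagation} and Lemma \ref{X-Ypjieju} have already been established, there is essentially no obstacle in this theorem; the only thing to check is that the hypotheses $p>1/H$ and $(l+1)p\le \bar{p}$ assumed here are exactly the hypotheses required by both auxiliary lemmas, and that the constant $C$ produced by each lemma is independent of $N$ and $\Delta$ respectively, so that their sum is independent of both. This verification is straightforward by inspection, so the proof reduces to a one-line triangle-inequality argument followed by direct substitution of the two lemma bounds.
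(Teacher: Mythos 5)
Your proposal is correct and follows exactly the route the paper takes: its proof of this theorem is just the one-line invocation of Lemma \ref{Hdayuerfenzhiyipropagation} and Lemma \ref{X-Ypjieju}, with the triangle-inequality decomposition $|X_t^i-Y_t^{i,N}|\le |X_t^i-X_t^{i,N}|+|X_t^{i,N}-Y_t^{i,N}|$ left implicit. You have simply written out the elementary splitting and the hypothesis check that the paper omits.
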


\begin{proof}
    By Lemmas \ref{Hdayuerfenzhiyipropagation} and \ref{X-Ypjieju}, the result is obvious.
\end{proof}

\begin{rem}
{\rm In this paper, we only studied the fractional Brownian motion with Hurst exponent $H\in(1/2,1)$. As it mentioned in \cite{HGZ24} and \cite{FHS22}, if $H\in(0,1/2)$ is considered, we can assume that $\sigma(\mu)$ is independent of $\mu$ to get similar results including the wellposeness, the propagation of Chaos, the numerical schemes etc.}
\end{rem}

\section{A Numerical Example}
In order to verify the previous results, a neutral delayed McKean-Vlasov SDE driven by fractional Brownian motion is introduced in this section. By setting $H=0.8$, we simulate the sample paths in the Matlab work environment, and apply the EM numerical method to approximate the example equation.
\begin{exa}
{\rm Consider the following one-dimensional super-linear McKean-Vlasov SDDE driven by fractional Brownian motion
\begin{equation}\label{anli}
{\rm d}\bigg(X_{t}+\frac{1}{2} X_{t-\tau}\bigg)=\bigg(X_{t}+X_{t-\tau}^2+\int_{\mathbb{R}}(X_{t}-y)\mu_t({\rm d}y)\bigg){\rm d}t+\bigg(\int_{\mathbb{R}}(X_{t}-y)\mu_t({\rm d}y)\bigg){\rm d}B_{t}^H,
\end{equation}
where the initial data $X_{0}$ is a random vector with a standard normal distribution,  $\mu_{t}$ denotes $\mathcal{L}({X_{t}})$ in \eqref{pp}. The corresponding interacting particle system is
\begin{align*}
  {\rm d}\bigg(X_{t}^{i,N}+\frac{1}{2}X_{t-\tau}^{i,N}\bigg)=&\bigg(X_{t}^{i,N}+(X_{t-\tau}^{i,N})^2+\frac{1}{N}\sum_{j=1}^N(X_{t}^{i,N}-X_{t}^{j,N})\bigg){\rm d}t\\
  &+\bigg(\frac{1}{N}\sum_{j=1}^N(X_{t}^{i,N}-X_{t}^{j,N})\bigg){\rm d}B_{t}^{H,i},
\end{align*}
where $i=1,2,\cdots, N. $

The strong convergence rate of EM scheme is shown in Figure \ref{figure1 } with $H=0.8$. For both numerical simulations with six step size ($\Delta=2^{-4},2^{-5},2^{-6},2^{-7},2^{-8},2^{-9}$), we use 5000 sample paths to calculate the mean square error at $T=1$, and use $\Delta=2^{-12}$ to approximate the exact solution. The solid blue line is a reference line with a slope of 1. The observations verify the previous theoretical results.}
\begin{figure}[H]
    \centering
    \includegraphics{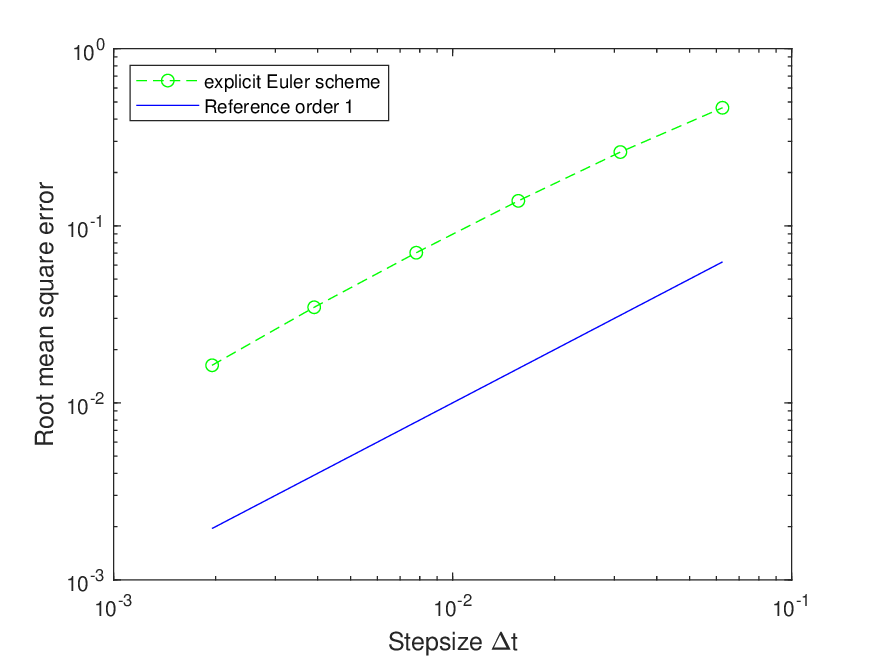}
    \caption{  Mean square error of the EM method for equation \eqref{anli} with $H=0.8$}
    \label{figure1 }
\end{figure}
\end{exa}

\end{document}